\newcommand{\si}{\sigma}
\newcommand{\ga}{\gamma}
\newcommand{\un}[1]{{\underline{#1}} }
\newcommand{\N}{{\mathbb N}}
\newcommand{\R}{{\mathbb R}}
\newcommand{\C}{{\mathbb C}}
\newcommand{\K}{{\mathbb K}}
\newcommand{\Sym}{{\mathcal S}}
\newcommand{\expan}[2]{#1^{[#2]}}
\DeclareMathOperator{\parti}{Part}
\DeclareMathOperator{\charakt}{char}
\DeclareMathOperator{\id}{id}
\DeclareMathOperator{\tr}{tr}
\newcommand{\upto}{,\ldots ,}
\newtheorem{thm}{Theorem}
\newtheorem{lem}{Lemma}[section]
\newtheorem{cond}[lem]{Conditions}
\theoremstyle{definition}
\newtheorem{remark}[lem]{Remark}
\newenvironment{proof_of}[1]{\medskip\noindent{\it Proof #1}}
{\hfill$\Box$ \bigskip}
\numberwithin{equation}{section}
\begin{document}

\title{Separating invariants for multisymmetric polynomials}

\author{Artem Lopatin, Fabian Reimers}

\address{State University of Campinas, 651 Sergio Buarque de Holanda, 13083-859 Campinas, SP, Brazil}

\email{artem\underline{\;\;}lopatin@yahoo.com}

\address{Technische Universit\"at M\"unchen, Zentrum Mathematik - M11, 
Boltzmannstr.~3, 85748 Garching, Germany}

\email{reimers@ma.tum.de}

%\date{November 1, 2019}

\subjclass[2010]{13A50, 16R30, 20B30}

\keywords{Invariant theory, separating invariants, symmetric group, multisymmetric polynomials}

\thanks{Acknowledgments. The first author was supported by FAPESP 2019/10821-8. We are grateful for this support.}

\begin{abstract}
This article studies separating invariants for the ring of multisymmetric polynomials in $m$ sets of $n$ variables over an arbitrary field $\K$. We prove that in order to obtain separating sets it is enough to consider polynomials that depend only on $\lfloor \frac{n}{2} \rfloor + 1$ sets of these variables. This improves a general result by Domokos about separating invariants. In addition, for $n \leq 4$ we explicitly give minimal separating sets (with respect to inclusion) for all $m$ in case $\charakt(\K) = 0$ or $\charakt(\K) > n$.
\end{abstract}

\maketitle

%%%%%%%%%%%%%%%%%%%%%%%%%%%%%%%%%%%%%%%%%%%%%%%%%%%%%%%%%%%%%%%%%%%%%%
\section{Introduction}
%%%%%%%%%%%%%%%%%%%%%%%%%%%%%%%%%%%%%%%%%%%%%%%%%%%%%%%%%%%%%%%%%%%%%%

Multisymmetric polynomials are the generalization of symmetric polynomials in $n$ variables $x_1 \upto x_n$ to $m \geq 2$ sets of $n$ variables. They have been classically studied in characteristic 0 (see Schl\"afli~\cite{schlaefli1852}, Junker~\cite{junker1893}, Weyl~\cite{weyl1939classical}, and for a historical overview see Domokos~\cite[Remark~2.6]{domokos2007}). In the setting of invariant theory symmetric polynomials appear as the elements of the invariant ring $\K[V]^{\Sym_n}$ of the standard representation $V = \K^n$ of the symmetric group $\Sym_n$, where $\K$ is a field. Multisymmetric polynomials form the invariant ring $\K[V^m]^{\Sym_n}$, where the action of $\Sym_n$ on $V$ is extended to a diagonal action of $\Sym_n$ on $V^m = V \oplus \ldots \oplus V$. The coordinate ring of $V^m$ is denoted by
\[ \K[V^m] = \K[ \,x(j)_{i} \mid i = 1 \upto n,\, j = 1 \upto m],\]
where $x(j)_{i}:V^m\to \K$ sends $(a_1,\ldots,a_m) \in V^m$ to the $i$-th coordinate $(a_j)_i$ of $a_j$.

\noindent
An obvious invariant inside $\K[V^m]$ is obtained for any $m$-tuple $\underline{k} = (k_1 \upto k_m) \in \N^m$, where $\N$ is the set of all non-negative integers, and any $1 \leq t \leq n$ by 
\begin{equation}\label{eqDefinitionOfF}  \si_t(\un{k})=\sum\limits_{1\leq i_1<\ldots< i_t\leq n} x^{\un{k}}_{i_1} \cdot \ldots \cdot x^{\un{k}}_{i_t} \quad \in \K[V^m]^{\Sym_n},\end{equation}
where 
\[ x^{\un{k}}_i=x(1)_i^{k_1}\cdot \ldots \cdot x(m)_i^{k_m} \quad \text{ (for $1\leq i\leq n$).}\] The invariants $\si_t(\un{k})$ are called \emph{elementary multisymmetric polynomials}.
For short, we write $\tr(\un{k})=\si_1(\un{k})$ for the power sum
\[\tr(\un{k}) = \si_1(\un{k})= \sum\limits_{i=1}^n x^{\un{k}}_i.\]
There is a natural $\N^m$-grading on $\K[V^m]$, which is preserved by the $\Sym_n$-action. Under this grading $\si_t({\underline{k}})$ is multi-homogeneous of multi-degree $t\underline{k}$. 

Upper bounds on the degrees of elements of minimal generating sets for the ring of multisymmetric polynomials $\K[V^m]^{\Sym_n}$ were studied by Fleisch\-mann~\cite{fleischmann}, Vaccarino~\cite{vaccarino2005}, Domokos~\cite{domokos2009vector}, and a minimal generating set was explicitly described by Rydh~\cite{rydh2007}. In particular,  $\K[V^m]^{\Sym_n}$ is known to be generated by the $m$ elements $\si_n(0,\ldots,0,1,0,\ldots,0)$, where 1 is at position $j$ for $1 \leq j \leq m$, together with the elements $\si_t(\un{k})$ with $1\leq t\leq n$ and $\un{k} \in \N^m$ satisfying $\gcd(\un{k})=1$ and $k_1<\frac{n}{t}$,$\ldots$,  $k_m<\frac{n}{t}$ 
(see Corollary 5.3 of~\cite{domokos2009vector}). In case $\charakt(\K) =  0$ or $\charakt(\K)> n$ the ring $\K[V^m]^{\Sym_n}$ is minimally (w.r.t. inclusion) generated as a $\K$-algebra by 
\begin{equation}\label{eqMinimalGen} 
\tr(\un{k})\;\; \text{ with } \quad  \un{k} \in \N^m \text{ satisfying } 1 \leq k_1 + \ldots + k_m \leq n,
\end{equation} 
see Domokos~\cite[Theorem 2.5]{domokos2009vector}.

%To a set of $m$-tuples $I \subset \N^m$ we assign the set of multi-homogeneous invariants
%\begin{equation}\label{eqDefinitionOfS}  S(I) := \{ f_{k_1 \upto k_m} \mid (k_1 \upto k_m) \in I \} .\end{equation}
%Throughout this paper we assume that . 

While the set (\ref{eqMinimalGen}) of generating invariants can not be improved, it is often helpful to consider a set of separating invariants, which can be significantly smaller than (\ref{eqMinimalGen}).
In 2002 Derksen and Kemper~\cite{derksen2002computational} introduced the notion of separating invariants as a weaker concept than generating invariants. Given a subset $S$ of an invariant ring $\K[W]^{G}$, we say that elements $u,\, v \in W$ {\it can be separated by $S$} if there exists an invariant $f\in S$ with $f(u)\neq f(v)$. The set $S$ is called {\it separating} if any $u,\, v \in W$ that can be separated by $\K[W]^G$ can also be separated by $S$. It is a well-known fact about the invariants of a finite group $G$ that $u$, $v$ can be separated by $\K[W]^G$ if and only if they are not in the same orbit (see~\cite[Section 2.4]{derksen2002computationalv2}).

Since the introduction of the notion of separating invariants, many results were found where they behave better than generating invariants. Some properties of separating invariants can be found in the second volume of the book by Derksen and Kemper~\cite[Section 2.4]{derksen2002computationalv2}. Explicit separating sets and upper degree bounds of separating sets have been calculated for some group actions, see e.g. \cite{derksen2018}, \cite{domokos2017}, \cite{draisma2008polarization}, \cite{dufresne2014}, \cite{dufresne2015separating}, \cite{elmer2014}, \cite{elmer2016}, \cite{kaygorodov2018}, \cite{kohls2013}, \cite{reimers2018}.

%For a positive integer $m_0 \leq m$ and a set $I \subset \N^{m_0}$ of $m_0$-tuples we define its {\it expansion}  $\expan{I}{m} \subset \K[V^m]^{\Sym_n}$ to a set of $m$-tuples by filling in zeros at all possible places. As an example, for $I = \{ (2,1) \}$ we have $\expan{I}{3} = \{ (2,1,0),\,(2,0,1),\,(0,2,1) \}.$
%The formal definition is 
%\begin{align}\label{expansion} \expan{I}{m} :=  \big\{ &(k_1 \upto k_m) \in \N^m \mid \, \exists \, (j_1 \upto j_{m_0}) \in I \text{ and } \exists \, (i_1 \upto i_{m_0}) \in \N^{m_0} \notag\\  &\text{such that } 1 \leq i_1 < i_2 < \ldots < i_{m_0} \leq m, \text{ } \notag  \,\, k_{i_l} = j_l \text{ for } l = 1 \upto m_0, \notag \\ &\text{and } k_l = 0 \text{ for all } l \notin \{ i_1 \upto i_{m_0}\} \,  \big\}. 
%\end{align}
%Given a set $S \subset \K[V^{m_0}]^{\Sym_n}$, its {\it expansion} $\expan{S}{m} \subset \K[V^m]^{\Sym_n}$ is defined as the set of all elements
%$$\si_t(\un{k}), \;\; \text{ where }\un{k}\in \expan{\{\un{j}\}}{m} \text{ for some } \si_t(\un{j})\in S.$$

In this paper we are interested in multi-homogeneous separating sets of $\K[V^m]^{\Sym_n}$. One way to obtain them is by \emph{expanding} a separating set for $\K[V^{m_0}]^{\Sym_n}$ for some $m_0 \leq m$. By this we mean the following construction. We say that an $m_0$-tuple $\un{j}\in\N^{m_0}$ is {\it $m$-admissible} if $1\leq j_1<\cdots < j_{m_0}\leq m$. For any $m$-admissible $\un{j}\in\N^{m_0}$ and $f\in \K[V^{m_0}]^{\Sym_n}$ we define the invariant $f^{(\un{j})}\in \K[V^{m}]^{\Sym_n}$ as the result of the following substitution of variables in $f$: 
\[ x(1)_i \to x(j_1)_i,\, \ldots, \, x(m_0)_i \to x(j_{m_0})_i \quad \text{ (for all } 1 \leq i \leq n).\]
Given a set $S \subset \K[V^{m_0}]^{\Sym_n}$, we define its {\it expansion} $\expan{S}{m} \subset \K[V^m]^{\Sym_n}$ by 
\begin{equation}\label{DefExpansion}
\expan{S}{m} = \{ f^{(\un{j})} \mid f \in S \text{ and } \un{j}\in\N^{m_0} \text{ is $m$-admissible}\}.
\end{equation}
As an example, for $S = \{ \si_t(2,1) \}\subset \K[V^2]^{\Sym_n}$ for some $t$ we have $$\expan{S}{3} = \{ \si_t(2,1,0),\,\si_t(2,0,1),\,\si_t(0,2,1) \}\subset \K[V^3]^{\Sym_n}.$$

The idea of studying separating sets of vector invariants that depend only on a smaller number of variables was considered by Domokos~\cite{domokos2007} for an $n$-dimensional representation $W$ of an arbitrary algebraic group $G$. Domokos showed that it is enough to consider a separating set of $\K[W^{m_0}]^G$ for $m_0 = 2 n$ and expand it to $\K[W^m]^G$ for $m \geq m_0$. This was later improved to $m_0 = n+1$ by Domokos and Szab\'o \cite{domokos2011helly}.
 
Our main theorem is a strengthening of this result in the case of multisymmetric polynomials to $m_0 = \lfloor \frac{n}{2} \rfloor + 1$, where $\lfloor \frac{n}{2} \rfloor$ denotes the largest integer $\leq \frac{n}{2}$.

\medskip
%Theorem 1---------------------------------------------------------------------
\begin{thm}\label{TheoremMain}  Assume that $m_0 \geq \lfloor \frac{n}{2} \rfloor + 1$ and that $S \subset \K[V^{m_0}]^{\Sym_n}$ is separating.  
Then for all $m \geq m_0$ the expansion $\expan{S}{m}$ as defined in (\ref{DefExpansion}) is separating for $\K[V^{m}]^{\Sym_n}$.
\end{thm}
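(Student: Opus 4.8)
The plan is to recast Theorem~\ref{TheoremMain} as a combinatorial statement about multisets of vectors and prove that by induction on $n$.

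\emph{Reduction.} Encode $u=(a_1,\ldots,a_m)\in V^m$ by the multiset $R(u)=\{r_1(u),\ldots,r_n(u)\}$ of its rows $r_i(u)=((a_1)_i,\ldots,(a_m)_i)\in\K^m$. The diagonal $\Sym_n$-action permutes rows, so $u$ and $v$ lie in one orbit iff $R(u)=R(v)$, equivalently (since $\Sym_n$ is finite) iff $u,v$ are not separated by $\K[V^m]^{\Sym_n}$. For $m$-admissible $\un j\in\N^{m_0}$ let $u_{\un j}\in V^{m_0}$ be the point keeping only the columns $j_1,\ldots,j_{m_0}$; the substitution defining $f^{(\un j)}$ gives $f^{(\un j)}(u)=f(u_{\un j})$. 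Hence $\expan{S}{m}$ does not separate $u$ from $v$ iff $f(u_{\un j})=f(v_{\un j})$ for all $f\in S$ and all admissible $\un j$; since $S$ is separating, this happens iff $u_{\un j}$ and $v_{\un j}$ lie in one $\Sym_n$-orbit for every $\un j$, i.e.\ iff $\pi_J(R(u))=\pi_J(R(v))$ as multisets for every $J\subseteq\{1,\ldots,m\}$ with $|J|=m_0$, where $\pi_J$ forgets the coordinates outside $J$. Thus the theorem is equivalent to the claim: \emph{if $A,B$ are multisets of $n$ vectors in $\K^m$ with $m\ge m_0=\lfloor n/2\rfloor+1$ and $\pi_J(A)=\pi_J(B)$ for all $|J|=m_0$, then $A=B$} (note the hypothesis then also holds for all $|J|\le m_0$).

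\emph{Induction on $n$.} The cases $n\le1$ and the case $m=m_0$ (where the only $m_0$-subset is everything) are immediate, so let $n\ge2$ and $m>m_0$. If all coordinates are constant on $A$, then $A=\{a,\ldots,a\}$ and equality of the $1$-coordinate value multisets forces $B=A$. Otherwise some coordinate, say the first, is non-constant on $A$; split $A=\bigsqcup_{l=1}^{r}A^{(l)}$ by the value of the first coordinate, with $r\ge2$ and $n_l:=|A^{(l)}|<n$. Equality of the $1$-coordinate value multisets makes $B=\bigsqcup_l B^{(l)}$ split the same way with $|B^{(l)}|=n_l$, and restricting the hypothesis to $m_0$-subsets containing $1$ gives $\pi_J(A^{(l)})=\pi_J(B^{(l)})$ for each $l$. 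Discarding the constant first coordinate, $\widetilde A^{(l)},\widetilde B^{(l)}$ are multisets of $n_l$ vectors in $\K^{m-1}$ with equal projections onto all $(m_0-1)$-subsets. If $n_l\le n-2$, or if $n_l=n-1$ with $n$ even, then $m_0-1=\lfloor n/2\rfloor\ge\lfloor n_l/2\rfloor+1$ and (as $m-1\ge m_0\ge\lfloor n_l/2\rfloor+1$) the induction hypothesis gives $A^{(l)}=B^{(l)}$. If this settles every $l$, then $A=B$.

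\emph{The tight case.} What remains is $n$ odd with some $n_l=n-1$, which forces $r=2$ and parts $(n-1,1)$. Here $m_0-1=(n-1)/2$ falls one short of the threshold $\lfloor(n-1)/2\rfloor+1=(n+1)/2$ that a size-$(n-1)$ multiset would require; this is exactly where the bound $\lfloor n/2\rfloor+1$ is sharp, and I expect it to be the main obstacle. To get around it, observe first that the singleton parts $\widetilde A^{(2)},\widetilde B^{(2)}$ are single vectors in $\K^{m-1}$ with equal $(m_0-1)$-projections, hence equal (as $m_0-1\ge1$). Since $m>m_0$, the set $\{2,\ldots,m\}$ still contains an $m_0$-subset; restricting the hypothesis to $m_0$-subsets $J$ with $1\notin J$ gives $\pi_J(A^{(1)})\sqcup\pi_J(A^{(2)})=\pi_J(B^{(1)})\sqcup\pi_J(B^{(2)})$, and cancelling the common multiset $\pi_J(A^{(2)})=\pi_J(B^{(2)})$ (multiset addition is cancellative) leaves $\pi_J(\widetilde A^{(1)})=\pi_J(\widetilde B^{(1)})$ for all $m_0$-subsets $J\subseteq\{2,\ldots,m\}$. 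Now $\widetilde A^{(1)},\widetilde B^{(1)}$ are multisets of $n-1$ vectors in $\K^{m-1}$ with equal projections onto all subsets of the threshold size $m_0=(n+1)/2$, so the induction hypothesis gives $\widetilde A^{(1)}=\widetilde B^{(1)}$, hence $A^{(1)}=B^{(1)}$ and $A=B$. Besides this tight case, the only thing needing care is the reduction itself; the rest is bookkeeping.
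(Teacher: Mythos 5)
Your proof is correct, but it takes a genuinely different route from the paper's. You recast the theorem as a purely combinatorial statement --- a multiset of $n$ points of $\K^m$ is determined by its images under all coordinate projections onto $\lfloor \frac n2\rfloor+1$ of the $m$ coordinates --- and prove that by a single induction on $n$, splitting the multiset of rows along the level sets of a non-constant coordinate; the one delicate case (odd $n$, fibre sizes $(n-1,1)$, where the quotient problem falls one coordinate short of the threshold) is handled correctly by first identifying the singleton fibres via projections through the splitting coordinate and then cancelling them out of the projections that avoid it. The paper instead works column-wise: after normalizing $q$ so that its first $m-1$ columns agree with those of $p$ (Lemma~\ref{LemmaFact0}), it runs a dichotomy on the stabilizers $G_{a_i}$, i.e.\ on the partitions $\parti(a_i)$ of the row set --- either some stabilizer contains the intersection of the preceding ones, so that column is redundant (Lemma~\ref{LemmaFact2}), or after $\lfloor\frac n2\rfloor$ proper refinements the common partition acquires a singleton block (Lemma~\ref{Lemma2OnPartitions}), which pins a row of $q$ to the corresponding row of $p$ and permits the descent to $n-1$ (Lemmas~\ref{LemmaFact1} and~\ref{LemmaFact3}) --- and it uses a double induction on $n$ and $m$. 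Your argument is more elementary and self-contained (no permutation $\sigma$ to track, no stabilizer subgroups, no separate induction on $m$) and it makes visible exactly where the bound $\lfloor\frac n2\rfloor+1$ is tight; the paper's reduction lemmas, on the other hand, are reused verbatim in Section~\ref{MinimalSeparating} to analyze the $n=4$ minimal separating set, which your reformulation would not directly supply.
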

\medskip

By Remark~\ref{remark_new} below it is enough to prove Theorem~\ref{TheoremMain} for a particular separating  set $S$. 

\medskip
%Remark 1.1---------------------------------------------------------------------
\begin{remark}(cf. \cite[Remark~1.3]{domokos2007})\label{remark_new}
Assume that $S_{1}$ and $S_{2}$ are separating sets for $\K[V^{m_0}]^{\Sym_n}$ and assume that $m>m_0$. Then $\expan{S_1}{m}$ is separating for $\K[V^{m}]^{\Sym_n}$ if and only if $\expan{S_2}{m}$ is separating for $\K[V^{m}]^{\Sym_n}$.
\end{remark}

Let $\sigma(n)$ denote the minimal number $m_0$ such that the expansion of some separating set $S$ for $\K[V^{m_0}]^{\Sym_n}$ produces a separating set for $\K[V^m]^{\Sym_n}$ for all $m \geq m_0$. By Remark~\ref{remark_new} this is independent of $S$ and
Theorem~\ref{TheoremMain} can be rephrased as 
\begin{equation}\label{eqSigma} \sigma(n) \leq \lfloor \frac{n}{2} \rfloor + 1.\end{equation}
In Theorem~\ref{TheoremMainMin} in Section \ref{MinimalSeparating} we explicitly give minimal (w.r.t. inclusion) separating sets for $n = 2,\,3,\,4$ in case $\charakt(\K) = 0$ or $\charakt(\K) > n$. These sets also show that the upper bound (\ref{eqSigma}) is exact for $n \leq 4$. For $n = 3$ and $\K = \C$ the algebra $\K[V^m]^{\Sym_3}$ was studied in details by Domokos and Pusk\'as \cite{domokos2012}.

This paper is structured as follows. At first we need a few basic results about partitions of the set $\{ 1 \upto n\}$, which are provided in Section~\ref{SectionPartitions}. Then we study the transitions from $n-1$ to $n$ and from $m-1$ and to $m$ in Section~\ref{SectionLemmasForInduction}. The lemmas from Section~\ref{SectionLemmasForInduction} will be applied in the proofs of Theorems~\ref{TheoremMain} and~\ref{TheoremMainMin} in Sections~\ref{SectionProofOfTheorem1}~and~\ref{MinimalSeparating}. In Remark~\ref{RemarkFraction} we compare the asymptotic sizes of our multi-homogeneous separating set and the minimal generating set (\ref{eqMinimalGen}) in case $\charakt(\K) = 0$ or $\charakt(\K) > n$.
%Most of this paper is devoted to the proof of Theorem~\ref{TheoremMain}. First need a few basic results about partitions of the set $\{ 1 \upto n\}$ which are provided in Section~\ref{SectionPartitions}. Then we study the transition between $n-1$ and $n$ and $m-1$ and $m$ in Section~\ref{SectionNotation}~and~\ref{SectionLemmasForInduction}. The  lemmas in Section~\ref{SectionLemmasForInduction} can be seen as partial cases of the proof of Theorem~\ref{TheoremMain}, but the point of these lemmas is that we can also use them for explicit studies of minimal separating sets for $n \leq 4$ in Section~\ref{MinimalSeparating}. In Section~\ref{SectionSize} we compare the asymptotic sizes of the multi-homogeneous separating sets and the minimal genarating set.

\textbf{Acknowledgment.} We are very grateful to the anonymous referee for his or her helpful comments, in particular, for pointing us towards a characteristic-free version of the results. 

%%%%%%%%%%%%%%%%%%%%%%%%%%%%%%%%%%%%%%%%%%%%%%%%%%%%%%%%%%%%%%%%%%%%%%
\section{Partitions of the set $[n]$}\label{SectionPartitions}
%%%%%%%%%%%%%%%%%%%%%%%%%%%%%%%%%%%%%%%%%%%%%%%%%%%%%%%%%%%%%%%%%%%%%%

Recall that a {\it partition} $A=\{I_1 \upto I_r\}$ of $[n] := \{ 1 \upto n\}$ is a set of non-empty subsets of $[n]$ with pairwise empty intersections such that $I_1\cup\ldots\cup I_r=[n]$. For a partition $A$ of $[n]$ denote by $G_A$ the subgroup of $G := \Sym_n$ that fixes the sets of $A$, i.e., 
\[ G_A := \{ \sigma \in \Sym_n \mid \sigma(I) = I \text{ for all } I \in A \}.\]
The intersection partition of two partitions $A$ and $B$ of $[n]$  is 
\[ A \sqcap B := \{ I \cap J \mid I \in A,\, J \in B \} \setminus \{ \emptyset \}.\]
Note that $\sqcap$ is an associative operation on the set of all partitions of  $[n]$.  For the corresponding subgroups we have
\begin{equation} \label{eq1}
G_{A \sqcap B} = G_A \cap G_B.    
\end{equation}
A vector $a = \begin{pmatrix}a_1 \\ \vdots \\ a_n \end{pmatrix} \in V$ defines an equivalence relation on $[n]$ through 
\[ i \sim j \quad \Longleftrightarrow \quad a_i = a_j .\]
This equivalence relation defines a partition, denoted by $\parti(a)$, of the set $[n]$. In particular, $|\parti(a)|$ is the number of distinct coordinates $a_1,\ldots,a_n$ of $a$. 
The stabilizer of $a$ under the $\Sym_n$-action on $V$ is equal to the subgroup $G_{\parti(a)}$ defined above: 
\begin{align}\label{StabilizerOfVector} G_a &:= \{ \sigma \in \Sym_n \mid a_{\sigma(i)} = a_i \text{ for } i = 1 \upto n \} \\ &\,\,= \{ \sigma \in \Sym_n \mid \sigma(I) = I  \text{ for all } I \in \parti(a) \} = G_{\parti(a)} .\notag \end{align}

\medskip
%Lemma L2.1---------------------------------------------------------------------
\begin{lem}\label{Lemma1OnPartitions} Let $A$ and $B$ be partitions of $[n]$. If $G_A \not\subset G_B$, then $|A \sqcap B| > |A|$.
\end{lem}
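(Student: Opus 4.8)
The plan is to prove the contrapositive-flavored statement directly by analyzing what it means for $G_A \not\subset G_B$. Since $A\sqcap B$ refines $A$ (each block of $A\sqcap B$ is contained in a block of $A$), we always have $|A\sqcap B|\geq |A|$, with equality precisely when each block $I\in A$ is entirely contained in a single block of $B$, i.e. when $A$ is a refinement of $B$. So the content of the lemma is: if $A$ refines $B$ then $G_A\subset G_B$. I would state and use this equivalence: $|A\sqcap B|=|A| \iff A$ refines $B$ (each $I\in A$ lies inside some $J\in B$).

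First I would make the refinement statement precise. Suppose $|A\sqcap B|=|A|$. For each $I\in A$, the sets $\{I\cap J : J\in B,\ I\cap J\neq\emptyset\}$ form a partition of $I$ into nonempty pieces, and these pieces (over all $I\in A$) are exactly the blocks of $A\sqcap B$. Since there are $|A\sqcap B|=|A|$ of them in total and each $I$ contributes at least one, each $I$ must contribute exactly one, i.e. $I\cap J=I$ for a unique $J\in B$; equivalently $I\subseteq J$. Hence $A$ is a refinement of $B$.

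Next I would check that a refinement relation implies the inclusion of stabilizer subgroups. Let $\sigma\in G_A$, so $\sigma(I)=I$ for every $I\in A$. Take any $J\in B$. Then $J$ is a union of blocks of $A$ (precisely those $I\in A$ with $I\subseteq J$, which cover $J$ since $A$ refines $B$ and $A$ partitions $[n]$). Since $\sigma$ fixes each such $I$ setwise, it fixes their union $J$ setwise, so $\sigma\in G_B$. Therefore $G_A\subseteq G_B$. Combining with the previous paragraph: $|A\sqcap B|=|A|$ forces $G_A\subseteq G_B$, which is the contrapositive of the lemma (together with the trivial inequality $|A\sqcap B|\geq|A|$ coming from $A\sqcap B$ refining $A$, which rules out $|A\sqcap B|<|A|$).

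I don't anticipate a serious obstacle here; this is essentially a bookkeeping argument about refinements of set partitions. The only mild subtlety is being careful that $A\sqcap B$ always refines $A$ so that $|A\sqcap B|\geq|A|$ holds unconditionally, making "$|A\sqcap B|>|A|$" equivalent to "$|A\sqcap B|\neq|A|$"; and correctly identifying the equality case with the refinement condition. Everything else is direct verification using the definitions of $\sqcap$ and $G_A$.
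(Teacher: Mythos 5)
Your proof is correct and follows essentially the same route as the paper, which reduces the lemma to the equivalence ``$G_A \subseteq G_B$ if and only if $A$ refines $B$'' (you prove the one direction actually needed, plus the counting fact that $|A \sqcap B| \geq |A|$ with equality exactly when $A$ refines $B$). The only difference is cosmetic: the paper's extended version argues directly from a permutation $\sigma \in G_A \setminus G_B$ to exhibit a block of $A$ that meets two blocks of $B$, whereas you spell out the refinement characterization that the paper's one-line proof calls obvious; both are complete.
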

\begin{proof} The statement obviously follows from the fact that $G_{A}\subset G_{B}$ holds if and only if the partition $A$ is  a refinement of the partition $B$. 
\end{proof}
\medskip

Next for a partition $A$ of $[n]$ we define
\[ \min(A) := \min \{ |I| \mid I \in A \} .\]
Clearly, if $|A| \geq \lfloor \frac{n}{2} \rfloor + 1$, then $\min(A) = 1$. This fact together with an iterated use of Lemma~\ref{Lemma1OnPartitions} implies the next statement.
%\begin{equation}\label{eqMinimumPartition} |A| \geq \lfloor \frac{n}{2} \rfloor + 1 \quad \Longrightarrow \quad \min(A) = 1 .\end{equation}

\medskip
%Lemma L2.2---------------------------------------------------------------------
\begin{lem}\label{Lemma2OnPartitions} Let $A_1 \upto A_r$ be partitions of $[n]$ with $r \geq \lfloor \frac n 2 \rfloor$ such that for all $i = 1 \upto r$ we have
\[ G_{A_1} \cap \ldots \cap G_{A_{i-1}} \not\subset G_{A_i}, \]
where for $i = 1$ the empty intersection is interpreted as $\Sym_n$. Then 
$$\min(A_1 \sqcap \ldots \sqcap A_r) = 1.$$
\end{lem}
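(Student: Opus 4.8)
The plan is to prove Lemma~\ref{Lemma2OnPartitions} by induction on $r$, tracking the quantity $|A_1 \sqcap \cdots \sqcap A_i|$ as $i$ increases and showing it grows by at least one at every step. First I would set $B_i := A_1 \sqcap \cdots \sqcap A_i$ for $1 \le i \le r$. By~(\ref{eq1}) (applied iteratively, using associativity of $\sqcap$) we have $G_{B_{i-1}} = G_{A_1} \cap \cdots \cap G_{A_{i-1}}$, so the hypothesis reads $G_{B_{i-1}} \not\subset G_{A_i}$ for each $i$ (with $B_0$ interpreted via the convention $G_{B_0} = \Sym_n$, i.e. $B_0 = \{[n]\}$). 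Now Lemma~\ref{Lemma1OnPartitions} applied to the pair $B_{i-1}$ and $A_i$ gives $|B_i| = |B_{i-1} \sqcap A_i| > |B_{i-1}|$, hence $|B_i| \ge |B_{i-1}| + 1$.

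Chaining these inequalities from $i=1$ to $i=r$ and using $|B_0| = 1$, I get $|B_r| \ge 1 + r \ge 1 + \lfloor \frac n2\rfloor = \lfloor \frac n2 \rfloor + 1$. Then I invoke the observation made in the excerpt just before the lemma: if a partition $A$ of $[n]$ satisfies $|A| \ge \lfloor \frac n2 \rfloor + 1$, then $\min(A) = 1$ (indeed, a partition of $[n]$ into that many blocks cannot have all blocks of size $\ge 2$, since that would force $n \ge 2|A| \ge 2\lfloor \frac n2\rfloor + 2 > n$). Applying this to $A = B_r = A_1 \sqcap \cdots \sqcap A_r$ yields $\min(A_1 \sqcap \cdots \sqcap A_r) = 1$, which is the claim.

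There is essentially no obstacle here: the statement is a clean bookkeeping argument once Lemma~\ref{Lemma1OnPartitions} and the pigeonhole remark are in hand. The only points requiring a small amount of care are (a) correctly handling the base case $i=1$, where the "empty intersection" convention must be matched to the partition $B_0 = \{[n]\}$ so that $G_{B_0} = \Sym_n$ and $G_{B_0} \not\subset G_{A_1}$ is exactly the $i=1$ hypothesis, and (b) making sure the iterated version of~(\ref{eq1}) is justified — but this is immediate from associativity of $\sqcap$ and a trivial induction, e.g. $G_{B_i} = G_{B_{i-1} \sqcap A_i} = G_{B_{i-1}} \cap G_{A_i}$. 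I would write the whole proof in three or four lines.
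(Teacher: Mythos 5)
Your proof is correct and follows essentially the same route as the paper's: iterate Lemma~\ref{Lemma1OnPartitions} using~(\ref{eq1}) to get $|A_1\sqcap\cdots\sqcap A_r|\geq r+1\geq \lfloor\frac n2\rfloor+1$, then apply the pigeonhole observation stated just before the lemma. The only cosmetic difference is that you uniformize the base case via $B_0=\{[n]\}$, whereas the paper starts directly from $|A_1|>1$.
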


%%%%%%%%%%%%%%%%%%%%%%%%%%%%%%%%%%%%%%%%%%%%%%%%%%%%%%%%%%%%%%%%%%%%%%
\section{Lemmas for reduction}\label{SectionLemmasForInduction}
%%%%%%%%%%%%%%%%%%%%%%%%%%%%%%%%%%%%%%%%%%%%%%%%%%%%%%%%%%%%%%%%%%%%%%

In this section we assume that $n \geq 2$. To work with the case of $n-1$ we will introduce the following notations.
Denote by $\widetilde{V} = \K^{n-1}$ the natural representation of $\Sym_{n-1}$. For $f \in \K[V^m]$ (resp. $S \subset \K[V^m]$) denote by $\widetilde{f}$ (resp. $\widetilde{S}$) the image of $f$ (resp. $S$) under the surjective $\K$-algebra homomorphism $\K[V^m] \to \K[\widetilde{V}^m]$ which maps $x(j)_n$ to $0$ for $j = 1 \upto m$. Clearly, this homomorphism restricts to a map 
\begin{equation*}\K[V^m]^{\Sym_n} \to \K[\widetilde{V}^m]^{\Sym_{n-1}}, \quad f \mapsto \widetilde{f}.\end{equation*}
 
\noindent{}Furthermore, for $p=(a_1,\ldots,a_m)\in V^m$ define $\widetilde{p}\in \widetilde{V}^m$ by deleting the $n$-th coordinate of every entry $a_j$ of $p$. So the superscript $\,\,\widetilde{}\,\,$ always means ``the $n$-th coordinate is missing''. 

To study the transitions from $n-1$ to $n$ and from $m-1$ to $m$, we will use the following conditions in the formulations of the next lemmas. 

\medskip

%Conditions C3.1---------------------------------------------------------------------
\begin{cond}\label{assump}
Assume $m \geq 2$, $n \geq 2$, $S \subset \K[V^{m-1}]^{\Sym_n}$, $T \subset \K[V^m]^{\Sym_n}$, $p=(a_1 \upto a_m) \in V^m$ and $q = (b_1 \upto b_m) \in V^m$ satisfy
\begin{enumerate}
\item[$\bullet$] $S$ is separating for the $\Sym_n$-action on $V^{m-1}$,
\item[$\bullet$] $\expan{S}{m}\subset T$,
\item[$\bullet$] $\widetilde{T}$ is separating for the $\Sym_{n-1}$-action on $\widetilde{V}^m$,
\item[$\bullet$] $p$ and $q$ can not be separated by $T$.
\end{enumerate}
\end{cond}

\medskip
The next lemma is trivial.

%Lemma L3.2---------------------------------------------------------------------
\begin{lem}\label{LemmaInductionOnN}
Assume that $p=(a_1 \upto a_m) \in V^m$ and $q = (b_1 \upto b_m) \in V^m$ satisfy $(a_1)_n = (b_1)_n \upto (a_m)_n = (b_m)_n$.  Then $p$ and $q$ are in the same $\Sym_n$-orbit if and only if $\widetilde{p}$ and $\widetilde{q}$ are in the same $\Sym_{n-1}$-orbit. 

In particular, if $S \subset \K[V^m]^{\Sym_n}$ is separating, then $\widetilde{S} \subset \K[\widetilde{V}^m]^{\Sym_{n-1}}$ is also separating.
\end{lem}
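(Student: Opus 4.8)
The plan is to establish the orbit equivalence first and then read off the statement about separating sets as a formal consequence. One implication is immediate: identifying $\Sym_{n-1}$ with the stabilizer of $n$ in $\Sym_n$, the deletion map $p\mapsto\widetilde p$ is equivariant for this subgroup, so if some $\tau\in\Sym_{n-1}$ sends $\widetilde p$ to $\widetilde q$, then the same permutation, now acting on $[n]$ and fixing $n$, sends $p$ to $q$; here one uses that the $n$-th coordinates of $p$ and $q$ coincide by hypothesis.

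For the converse I would start from an arbitrary $\sigma\in\Sym_n$ with $\sigma\cdot p=q$ and modify it so that it fixes $n$. Put $k=\sigma^{-1}(n)$; comparing the $n$-th coordinates on both sides of $\sigma\cdot p=q$ and using $(a_j)_n=(b_j)_n$ yields $(a_j)_k=(a_j)_n$ for every $j=1\upto m$, i.e.\ the transposition $(k\;n)$ stabilizes $p$. Hence $\sigma(k\;n)$ still sends $p$ to $q$ and now fixes $n$, so it lies in $\Sym_{n-1}$ and carries $\widetilde p$ to $\widetilde q$.

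For the ``in particular'' part, given $u,v\in\widetilde V^m$ in distinct $\Sym_{n-1}$-orbits, lift them to $p,q\in V^m$ by appending $0$ as the $n$-th coordinate of each component. Then $\widetilde p=u$ and $\widetilde q=v$, the $n$-th coordinates of $p$ and $q$ agree, so by the equivalence $p$ and $q$ lie in distinct $\Sym_n$-orbits; since $S$ is separating and, for a finite group, separating sets separate orbits, some $f\in S$ satisfies $f(p)\neq f(q)$. Because $\widetilde f$ is the specialization of $f$ at $x(j)_n=0$, we have $\widetilde f(u)=f(p)$ and $\widetilde f(v)=f(q)$, so $\widetilde f\in\widetilde S$ separates $u$ and $v$.

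The argument has no real obstacle; the only point that is not completely automatic is the transposition trick in the converse direction, which is needed precisely because a permutation realizing the orbit equivalence of $p$ and $q$ need not fix $n$.
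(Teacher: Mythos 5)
Your proof is correct and follows essentially the same route as the paper: reduce to a permutation fixing $n$ for the orbit equivalence, then lift points of $\widetilde{V}^m$ by appending zeros for the separating statement. Your transposition trick $\sigma\mapsto\sigma(k\;n)$ is a slightly cleaner implementation of the paper's adjustment of $\sigma$ (the paper factors $\sigma=\tau\rho$ with $\rho$ permuting only the rows equal to the $n$-th one), but the underlying idea is identical.
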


\medskip
%Lemma L3.3---------------------------------------------------------------------
\begin{lem}\label{LemmaFact0} 
Assume that Conditions~\ref{assump} hold. Then there exists a permutation $\sigma \in \Sym_n$ such that 
\[ \sigma q  = (a_1 \upto a_{m-1},\, c) \quad \text{ (for some } c \in V).\]
\end{lem}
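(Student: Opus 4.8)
The goal is to find $\sigma \in \Sym_n$ with $\sigma q$ agreeing with $p$ in the first $m-1$ components. The natural strategy is to use the separating set $S$ on $V^{m-1}$ together with the fact that $p$ and $q$ cannot be separated by $T \supseteq \expan{S}{m}$. First I would consider the projections $p' = (a_1 \upto a_{m-1}) \in V^{m-1}$ and $q' = (b_1 \upto b_{m-1}) \in V^{m-1}$ obtained by forgetting the last component. For every $f \in S$, the invariant $f^{(1,\ldots,m-1)}$ lies in $\expan{S}{m} \subset T$, and since $p$ and $q$ are not separated by $T$ we get $f(p') = f^{(1,\ldots,m-1)}(p) = f^{(1,\ldots,m-1)}(q) = f(q')$. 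Because $S$ is separating for the $\Sym_n$-action on $V^{m-1}$, it follows that $p'$ and $q'$ lie in the same $\Sym_n$-orbit: there is $\sigma \in \Sym_n$ with $\sigma p' = q'$, equivalently $\sigma^{-1} q' = p'$.

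**Finishing.**

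Now apply $\sigma^{-1}$ to all $m$ components of $q$: set $\sigma' = \sigma^{-1}$, so that $\sigma' q = (\sigma' b_1 \upto \sigma' b_{m-1}, \sigma' b_m) = (a_1 \upto a_{m-1}, \sigma' b_m)$, since $\sigma'$ acts diagonally and already sends the first $m-1$ components of $q$ to those of $p$. Taking $c = \sigma' b_m \in V$ gives exactly the desired conclusion with the permutation $\sigma'$ (which I would simply rename $\sigma$ in the statement).

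**Main obstacle.**

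The only real subtlety is to make sure the bookkeeping with $m$-admissible tuples is correct: one must use the specific $m$-admissible tuple $\un{j} = (1, 2 \upto m-1)$ so that the substitution $x(1)_i \to x(j_1)_i \upto x(m-1)_i \to x(j_{m-1})_i$ is the identity on the first $m-1$ blocks of variables, which makes $f^{(\un{j})}(p) = f(p')$ hold on the nose. Everything else is formal: the passage from "not separated by $T$" to "not separated by $\expan{S}{m}$" is immediate from $\expan{S}{m} \subset T$, and the passage to "same orbit in $V^{m-1}$" uses only that $S$ is separating there. No use of the hypotheses on $\widetilde{T}$ or on $\lfloor n/2 \rfloor + 1$ is needed for this particular lemma — those enter later — which is consistent with the remark that this lemma is trivial.
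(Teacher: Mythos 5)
Your proposal is correct and follows essentially the same route as the paper: project to $V^{m-1}$, use the expanded invariants $f^{(1,\ldots,m-1)} \in \expan{S}{m} \subset T$ to see that $p'$ and $q'$ are not separated by $S$, invoke the separating property (together with the standard fact that for finite groups non-separation implies lying in the same orbit) to get $\sigma q' = p'$, and set $c = \sigma b_m$. The only cosmetic difference is that you first produce $\sigma$ with $\sigma p' = q'$ and then pass to $\sigma^{-1}$, whereas the paper takes $\sigma$ with $\sigma q' = p'$ directly.
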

\begin{proof}
Delete the last column vector from $p$ and $q$ to obtain
\[ p' = (a_1 \upto a_{m-1}) \text{ and } q' = (b_1 \upto b_{m-1}) \in V^{m-1}.\]
By the definition of the expansion, for all $f \in S$ we have $f^{(1,2 \upto m-1)} \in \expan{S}{m} \subset T$. Hence the assumption that $p$ and $q$ can not be separated by $T$, implies that for all $f \in S$ we have 
\begin{align*} f(p') = f^{(1,2 \upto m-1)}(p) = f^{(1,2 \upto m-1)}(q) = f(q').\end{align*}
Since $S$ is separating, there exists $\sigma \in \Sym_n$ such that $\sigma q' = p'$. Hence with $c = \sigma b_m$ we obtain
\[\sigma q = (a_1 \upto a_{m-1},\,c).\]
\end{proof}
\medskip

We will often apply Lemma~\ref{LemmaFact0} to replace $q \in V^m$ by $\sigma q$. For this purpose we add the following trivial remark.

\medskip
%Remark 3.4---------------------------------------------------------------------
\begin{remark}\label{RemarkReplacePAndQ}
For $p$, $q \in V^m$, $S \subseteq K[V^m]^{\Sym_n}$ and $\sigma \in \Sym_n$ the statements
\begin{enumerate}
\item[$\bullet$] $p$ and $q$ can not be separated by $S$;
\item[$\bullet$] $p$ and $q$ are in the same $\Sym_n$-orbit;
\end{enumerate} 
do not change their validity when replacing $p$ or $q$ (or both) with $\sigma p$ or $\sigma q$, respectively.
\end{remark}
\medskip

In the next two lemmas we will use the following notation. For an associative binary operation $\ast$ on a set $X$ and $a_1 \upto a_m \in X$ we write 
\[ a_1 \ast \ldots \ast \widehat{a_i}  \ast \ldots \ast \widehat{a_j} \ast \ldots \ast a_m \] 
for 
\[ a_1 \ast \ldots \ast a_{i-1}\ast a_{i+1}  \ast \ldots \ast a_{j-1}\ast a_{j+1}  \ast \ldots \ast a_m\] where $i,\,j \in \{ 1 \upto m\}$. 

\medskip
%Lemma L3.5------------------------------------------------------------------
\begin{lem}\label{LemmaFact2} 
Assume that Conditions~\ref{assump} hold and that there exist $i,\,j \in \{1 \upto m\}$ with $i \neq j$ such that for the stabilizers in $\Sym_n$ we have
\begin{equation}\label{eqLemmaFact2}
G_{a_1} \cap \ldots \cap \widehat{G_{a_i}}\cap \ldots \cap \widehat{G_{a_j}}\cap \ldots \cap G_{a_{m}} \subset G_{a_i}.
\end{equation}
Then $p$ and $q$ are in the same $\Sym_n$-orbit.
\end{lem}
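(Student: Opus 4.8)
The plan is to use Lemma~\ref{LemmaFact0} to first normalize the pair $(p,q)$ and then exploit the stabilizer hypothesis \eqref{eqLemmaFact2} together with separation by the expansion $\expan{S}{m}\subset T$. After applying Lemma~\ref{LemmaFact0} and Remark~\ref{RemarkReplacePAndQ}, I may assume that $q = (a_1 \upto a_{m-1},\,c)$ for some $c\in V$; it remains to show $c$ lies in the $\Sym_n$-orbit of $a_m$ under the subgroup that fixes $a_1 \upto a_{m-1}$. Actually, after a suitable re-indexing (permuting the $m$ columns), the index $j$ in \eqref{eqLemmaFact2} can be taken to be $m$, so that the hypothesis reads $G_{a_1}\cap\ldots\cap\widehat{G_{a_i}}\cap\ldots\cap G_{a_{m-1}}\subset G_{a_i}$, i.e., $a_i$ is already determined (up to the relevant stabilizer) by the other columns among the first $m-1$. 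The point of deleting column $j=m$ is that after the normalization $q=(a_1\upto a_{m-1},c)$ the first $m-1$ columns of $p$ and $q$ already agree.

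Next I would apply the separating set $S$ on $V^{m-1}$ to the $(m-1)$-tuples obtained by deleting column $i$ from $p'=(a_1\upto a_{m-1},c)$-wait, that is only $m-1$ columns; more carefully, I delete columns $i$ from the two $m$-tuples $p$ and $q$, getting $m-1$-tuples that differ only in the last slot ($a_m$ versus $c$), apply that $\expan{S}{m}\subset T$ does not separate $p$ from $q$, hence $S$ does not separate these two $(m-1)$-tuples, hence there is $\tau\in\Sym_n$ carrying one to the other. This $\tau$ fixes $a_1\upto\widehat{a_i}\upto a_{m-1}$, hence by \eqref{eqLemmaFact2} (with $j=m$) it also fixes $a_i$; therefore $\tau$ fixes all of $a_1\upto a_{m-1}$ and sends $a_m$ to $c$ (or the other way around). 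Then $\tau$ witnesses that $p$ and $q=(a_1\upto a_{m-1},c)$ are in the same $\Sym_n$-orbit, and undoing the earlier normalization via Remark~\ref{RemarkReplacePAndQ} completes the argument.

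The main obstacle I anticipate is bookkeeping: making sure the deletion of column $j$ in Lemma~\ref{LemmaFact0} and the deletion of column $i$ in the final step are compatible, i.e., that after normalizing with $j=m$ one really gets $(m-1)$-tuples differing in exactly one coordinate so that the hypothesis $r\le m-1$ columns are available to pin down $a_i$. One must also handle the degenerate possibilities $i$ or $j$ equal to $m$ and the case $m=2$ (where the intersection in \eqref{eqLemmaFact2} is empty, interpreted as $\Sym_n$, forcing $a_i$ to have trivial stabilizer, i.e., all coordinates distinct). I expect these to be routine once the indexing is set up, so the real content is just the combination of Lemma~\ref{LemmaFact0}, the separating property of $S$, and the stabilizer inclusion.
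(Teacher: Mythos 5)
Your proposal is correct and follows essentially the same route as the paper: normalize $q$ to $(a_1\upto a_{m-1},c)$ via Lemma~\ref{LemmaFact0}, delete column $i$ (with $j$ re-indexed to $m$), use that $S$ is separating on $V^{m-1}$ to produce $\tau$ fixing $a_1\upto\widehat{a_i}\upto a_{m-1}$ with $\tau c=a_m$, and then invoke \eqref{eqLemmaFact2} to conclude $\tau\in G_{a_i}$ and hence $\tau q=p$. The only slip is in your side remark on $m=2$: the hypothesis $\Sym_n\subset G_{a_i}$ forces the \emph{full} stabilizer, i.e.\ all coordinates of $a_i$ equal (not all distinct); this does not affect the argument, which applies uniformly.
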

\begin{proof} Without loss of generality we can assume that $i=m-1$ and $j=m$. Furthermore, after applying Lemma~\ref{LemmaFact0} and Remark~\ref{RemarkReplacePAndQ}, we can assume that 
\[p=(a_1 \upto a_{m-1},\,a_m)\quad \text{ and } \quad q=(a_1 \upto a_{m-1},\,c).\]
Consider the reduced vectors
\begin{align*} p':= (a_1 \upto a_{m-2},\,a_m) \quad \text{ and } \quad q' :=  (a_1 \upto  a_{m-2},\,c) \in V^{m-1}.\end{align*}
For all $f \in S$ we have $f^{(1,2 \upto m-2,m)} \in \expan{S}{m} \subset T$ and hence
\begin{align*} f(p') = f^{(1,2 \upto m-2,m)}(p) = f^{(1,2 \upto m-2,m)}(q) =f(q').\end{align*}
Since $S$ is separating, then there exists $\sigma \in \Sym_n$ such that $\sigma q' = p'$. Then $\sigma c = a_m$ and 
$$\sigma \in  G_{a_1} \cap \ldots \cap G_{a_{m-2}}.$$
Thus (\ref{eqLemmaFact2}) implies $\sigma \in G_{a_{m-1}}$ and $\sigma q = p$ follows.
\end{proof}
\medskip

%Lemma L3.6------------------------------------------------------------------
\begin{lem}\label{LemmaFact1} 
Assume that Conditions~\ref{assump} hold and that there exists $l \in \{ 1 \upto n\}$ such that $(a_1)_l = (b_1)_l \upto (a_m)_l = (b_m)_l$.
Then $p$ and $q$ are in the same $\Sym_n$-orbit. 
\end{lem}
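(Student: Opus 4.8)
The plan is to reduce to a situation where Lemma~\ref{LemmaFact2} applies, using the extra coordinate $l$ on which $p$ and $q$ already agree. First I would observe that since $(a_j)_l = (b_j)_l$ for all $j$, we may enlarge the data by appending a new $(m{+}1)$-st vector equal to the standard basis vector $e_l$ (or any vector whose stabilizer is exactly $G_B$ for $B$ the partition with the single block $\{l\}$ separated off). Concretely, set $p^+ = (a_1 \upto a_m, e_l)$ and $q^+ = (b_1 \upto b_m, e_l)$ in $V^{m+1}$. The point of this is that $G_{e_l} = G_{\{[n]\setminus\{l\}, \{l\}\}}$ has the block $\{l\}$ of size $1$, which will let us get into the hypothesis \eqref{eqLemmaFact2} of Lemma~\ref{LemmaFact2}. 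One then needs to check that $p^+$ and $q^+$ can not be separated by the appropriate $T^+ \subseteq \K[V^{m+1}]^{\Sym_n}$; this is where Remark~\ref{remark_new}, Lemma~\ref{LemmaInductionOnN}, and the hypothesis that $p,q$ are not separated by $T$ (together with the coordinate agreement at $l$) come together.

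Alternatively — and I expect this is cleaner and closer to what the authors do — I would argue directly. After applying Lemma~\ref{LemmaFact0} and Remark~\ref{RemarkReplacePAndQ}, we may assume $p = (a_1 \upto a_{m-1}, a_m)$ and $q = (a_1 \upto a_{m-1}, c)$. It suffices to find some index $i$ such that $G_{a_1} \cap \cdots \cap \widehat{G_{a_i}} \cap \cdots \cap G_{a_m} \subset G_{a_i}$, for then \eqref{eqLemmaFact2} holds (with $j$ the last index) and Lemma~\ref{LemmaFact2} finishes the proof. If no such $i$ exists, then by Lemma~\ref{Lemma1OnPartitions} each partition $\parti(a_i)$ refines strictly the intersection of the others, so the sequence $\parti(a_1), \parti(a_2), \ldots$ satisfies the hypotheses of Lemma~\ref{Lemma2OnPartitions} after reindexing, and — since $m \geq m_0 \geq \lfloor n/2\rfloor + 1$ is the regime we care about — we get $\min\bigl(\parti(a_1) \sqcap \cdots \sqcap \parti(a_m)\bigr) = 1$. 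But $\parti(a_1) \sqcap \cdots \sqcap \parti(a_m) = \parti\bigl((a_1 \upto a_m)\bigr)$ viewed as the common refinement, and $\min = 1$ means some coordinate index, say $l'$, has a singleton block, i.e., $(a_k)_{l'} \neq (a_k)_{l''}$ for $l'' \ne l'$ is not quite it — rather, the value vector $((a_1)_{l'} \upto (a_m)_{l'})$ is distinct from $((a_1)_{l''} \upto (a_m)_{l''})$ for every other $l''$. The hypothesis gives us such a distinguished coordinate $l$ for free (it is the one on which $p$ and $q$ agree), so the point is to use $l$ to manufacture the needed inclusion \eqref{eqLemmaFact2} rather than to derive it from partition-counting.

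The mechanism I expect: let $A = \parti(a_1) \sqcap \cdots \sqcap \parti(a_{m-1})$, the stabilizer partition of $(a_1 \upto a_{m-1})$. By Lemma~\ref{LemmaFact0} we have $\sigma q' = p'$ for the reduced vectors with $\sigma \in G_A$; we want to upgrade $\sigma \in G_{a_m}$ (equivalently $\sigma c = a_m$) to $\sigma q = p$. The agreement $(a_m)_l = (b_m)_l = c_l$ (using that the swap $q \mapsto \sigma q$ interacts correctly — here I must be careful, since applying $\sigma$ may move coordinate $l$) should force, together with $\sigma$ preserving the block structure of $A$ and the singleton-at-$l$ property, that $\sigma$ fixes $l$ and hence $\sigma a_m = a_m$. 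The hard part will be exactly this bookkeeping: making sure that after the reduction via Lemma~\ref{LemmaFact0} the coordinate $l$ still plays the role it should, i.e., tracking what happens to the "agreeing coordinate" $l$ under the permutation $\sigma$ produced by the separating property. I would handle this by choosing, among all $\sigma$ with $\sigma q' = p'$, one that also respects $l$ — arguing that $l$'s block in $A$ together with the $l$-th coordinate value pins it down, or else that we may compose with an element of $G_A$ fixing $p'$ to arrange $\sigma(l) = l$, at which point $\sigma c = a_m$ on the $l$-th coordinate is automatic and the remaining coordinates are controlled by $\sigma q' = p'$. Once $\sigma a_m = a_m$ is established, $\sigma q = p$ and we are done.
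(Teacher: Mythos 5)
Your proposal misses the mechanism that makes this lemma work, namely the third bullet of Conditions~\ref{assump}: that $\widetilde{T}$ is separating for the $\Sym_{n-1}$-action on $\widetilde{V}^m$. Lemma~\ref{LemmaFact1} is precisely the reduction step from $n$ to $n-1$: the paper applies the transposition $(l,n)$ to assume $l=n$, subtracts the $\Sym_n$-fixed point $c=((a_1)_n\cdot e,\ldots,(a_m)_n\cdot e)$ (with $e=(1,\ldots,1)^T$) from both $p$ and $q$ so that all $n$-th coordinates become $0$, notes that then $f(p)=\widetilde f(\widetilde p)$ and $f(q)=\widetilde f(\widetilde q)$ for every $f\in T$, concludes from the separating property of $\widetilde T$ that $\widetilde p$ and $\widetilde q$ lie in the same $\Sym_{n-1}$-orbit, and lifts this to a $\Sym_n$-orbit statement via Lemma~\ref{LemmaInductionOnN}. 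Neither of your two sketches carries this out, and the hypothesis on $\widetilde T$ plays no substantive role in either.

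Both of your routes have concrete gaps. The first (appending $e_l$ as an extra column) cannot get off the ground: Conditions~\ref{assump} give a set $T\subset\K[V^m]^{\Sym_n}$ failing to separate $p$ and $q$, but supply no set in $\K[V^{m+1}]^{\Sym_n}$ failing to separate $p^+$ and $q^+$; if $p$ and $q$ were in different orbits, so would be $p^+$ and $q^+$, so positing such a set with good separating properties amounts to assuming the conclusion. In the second route: (i) the hypothesis of the lemma is only that $p$ and $q$ agree at one coordinate $l$; it yields no singleton block of any $\parti(a_i)$ and no stabilizer inclusion of the form (\ref{eqLemmaFact2}), so there is nothing to feed into Lemma~\ref{LemmaFact2}; (ii) Lemma~\ref{Lemma2OnPartitions} needs roughly $m-2\geq\lfloor n/2\rfloor$ together with the non-inclusion hypotheses, whereas Conditions~\ref{assump} only assume $m\geq 2$, and Lemma~\ref{LemmaFact1} is invoked inside the proof of Lemma~\ref{LemmaFact3} exactly after such partition-counting has been exhausted, so re-running it here is neither available nor non-circular; (iii) even if the permutation $\sigma$ with $\sigma q'=p'$ could be arranged to fix the index $l$, this does not give $\sigma a_m=a_m$ (fixing one coordinate index does not fix a vector), and for the reduction deleting column $m-1$ you would still need $\sigma a_{m-1}=a_{m-1}$ to conclude $\sigma q=p$ --- which is exactly the inclusion (\ref{eqLemmaFact2}) that is not assumed here. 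The agreement at coordinate $l$ is not there to manufacture a stabilizer inclusion; it is there to let you normalize the $n$-th row to zero and descend to $\Sym_{n-1}$.
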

\begin{proof} 
By Remark~\ref{RemarkReplacePAndQ} we can apply the transposition $\tau = (l,\,n)$ to both $p$ and $q$ and hence assume that $l=n$.

Denote by $e$ the vector $(1,\ldots,1)^T \in V$. Then $c=(\ga_1 e,\ldots, \ga_m e)\in V^m$ is a fixed point of $\Sym_n$ for any $\ga_1,\ldots,\ga_m\in\K$. Consequently, $p$ and $q$ are in the same $\Sym_n$-orbit if and only if $p - c$ and  $q - c$ are in the same $\Sym_n$-orbit.

Let $c= ((a_1)_n \cdot e, \ldots, (a_m)_n \cdot e)\in V^m$. Replacing $p$ and $q$ by $p-c$ and $q-c$, respectively, we may assume that $0 = (a_j)_n= (b_j)_n$ for all $1\leq j\leq m$. Then  $\widetilde{f}(\widetilde{p})=f(p)=f(q)=\widetilde{f}(\widetilde{q})$ for all $f\in T$. Since $\widetilde{T}$ is separating,  $\widetilde{p}$ and $\widetilde{q}$ are in the same $\Sym_{n-1}$-orbit, and Lemma~\ref{LemmaInductionOnN} concludes the proof.
\end{proof}

\medskip
%Lemma L3.7---------------------------------------------------------------------
\begin{lem}\label{LemmaFact3}
Assume that Conditions~\ref{assump} hold and let $A_i$ denote the partitions $A_i := \parti(a_i)$ for $i \in \{1 \upto m\}$. Moreover, assume that there exist $i,\,j \in \{1 \upto m\}$ with $i\neq j$ such that
\begin{equation}\label{eqLemmaFact3} 
\min(A_1 \sqcap \ldots \sqcap \widehat{A_i} \sqcap \ldots\sqcap \widehat{A_j} \sqcap \ldots \sqcap A_m) = 1.\end{equation}
Then $p$ and $q$ are in the same $\Sym_n$-orbit.
\end{lem}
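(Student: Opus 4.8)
\textbf{Proof plan for Lemma~\ref{LemmaFact3}.} The plan is to reduce this statement to Lemma~\ref{LemmaFact2} by exploiting the translation-to-stabilizer dictionary recorded in~(\ref{eq1}) and~(\ref{StabilizerOfVector}). First I would observe that condition~(\ref{eqLemmaFact3}) says the intersection partition $A_1 \sqcap \ldots \sqcap \widehat{A_i} \sqcap \ldots\sqcap \widehat{A_j} \sqcap \ldots \sqcap A_m$ has a block of size $1$, i.e.\ there is an index $l \in [n]$ that forms a singleton block of this intersection partition. Translated via~(\ref{eq1}) (iterated, using associativity of $\sqcap$) and~(\ref{StabilizerOfVector}), this means: the subgroup $G_{a_1} \cap \ldots \cap \widehat{G_{a_i}}\cap \ldots \cap \widehat{G_{a_j}}\cap \ldots \cap G_{a_{m}}$ fixes the point $l$. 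But a subgroup of $\Sym_n$ that fixes $l$ is precisely a subgroup of $\Sym_{[n]\setminus\{l\}} \subset \Sym_n$; in particular it is contained in the stabilizer of any vector whose $l$-th coordinate is the unique one with its value. This is not quite $G_{a_i}$ in general, so a small additional argument is needed here.

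The cleaner route, which I would actually take, is to split into two cases according to whether the chosen singleton-block index $l$ behaves the same way in $p$ and $q$. Applying Lemma~\ref{LemmaFact0} and Remark~\ref{RemarkReplacePAndQ}, assume without loss that $i = m-1$, $j = m$, and $q = (a_1 \upto a_{m-1},\, c)$ for some $c \in V$. Now run the same reduced-vector argument as in Lemma~\ref{LemmaFact2}: for $p' = (a_1 \upto a_{m-2},\,a_m)$ and $q' = (a_1 \upto a_{m-2},\,c)$, the invariants $f^{(1,2\upto m-2,m)}$ for $f\in S$ lie in $\expan{S}{m}\subset T$ and fail to separate $p$ and $q$, so they fail to separate $p'$ and $q'$; since $S$ is separating there is $\sigma \in \Sym_n$ with $\sigma q' = p'$, whence $\sigma c = a_m$ and $\sigma \in G_{a_1} \cap \ldots \cap G_{a_{m-2}}$. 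By~(\ref{eq1}) and~(\ref{StabilizerOfVector}) this group equals $G_{A_1 \sqcap \ldots \sqcap A_{m-2}}$, which by hypothesis~(\ref{eqLemmaFact3}) (that partition being $A_1 \sqcap \ldots \sqcap \widehat{A_{m-1}} \sqcap \ldots\sqcap \widehat{A_m} \sqcap \ldots \sqcap A_m$, so exactly the stated one) has a singleton block $\{l\}$; hence $\sigma$ fixes $l$, i.e.\ $\sigma(l) = l$, so $(a_m)_l = (\sigma c)_l = c_{\sigma(l)} = c_l = (b_m)_l$ after unwinding the replacement. More usefully, $\sigma$ fixing $l$ together with $\sigma q' = p'$ gives $(a_j)_l = (\sigma b_j)_l = (b_j)_{\sigma(l)} = (b_j)_l$ for $j = 1\upto m-2$, and $\sigma c = a_m$ with $\sigma(l)=l$ gives $(a_m)_l = c_l$; since $q = \sigma_0 q_{\mathrm{orig}}$ for the permutation $\sigma_0$ from Lemma~\ref{LemmaFact0} and $l$ was (or can be taken, after conjugating everything by $\sigma_0$) a coordinate where we now control matching, we land in the hypothesis of Lemma~\ref{LemmaFact1}.

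So the structure I propose is: (1) set up $i=m-1$, $j=m$, $q=(a_1\upto a_{m-1},c)$ via Lemma~\ref{LemmaFact0}; (2) produce $\sigma\in G_{a_1}\cap\cdots\cap G_{a_{m-2}}$ with $\sigma q'=p'$ and $\sigma c = a_m$ from the separating property of $S$, exactly as in Lemma~\ref{LemmaFact2}; (3) use~(\ref{eq1}), (\ref{StabilizerOfVector}) and~(\ref{eqLemmaFact3}) to find $l\in[n]$ fixed by every element of $G_{a_1}\cap\cdots\cap G_{a_{m-2}}$, in particular $\sigma(l)=l$; (4) conclude that the $l$-th coordinates of $p$ and $q$ agree entrywise — namely $(a_j)_l=(b_j)_l$ for $j=1\upto m$ — and invoke Lemma~\ref{LemmaFact1} to finish. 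The main obstacle I anticipate is bookkeeping around step~(4): one must be careful that $l$ is a coordinate where $p$ and the \emph{current} $q$ genuinely agree, tracking it through the permutation supplied by Lemma~\ref{LemmaFact0}; using Remark~\ref{RemarkReplacePAndQ} freely (it is harmless for both ``same orbit'' and ``not separated by $T$'') should make this transparent, and then Lemma~\ref{LemmaFact1} does the rest with no characteristic assumptions needed.
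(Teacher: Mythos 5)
Your proposal is correct and follows essentially the same route as the paper's proof: normalize via Lemma~\ref{LemmaFact0} so that $q=(a_1\upto a_{m-1},c)$, run the reduced-vector argument of Lemma~\ref{LemmaFact2} to obtain $\sigma\in G_{a_1}\cap\cdots\cap G_{a_{m-2}}=G_{A_1\sqcap\ldots\sqcap A_{m-2}}$ with $\sigma c=a_m$, use the singleton block from~(\ref{eqLemmaFact3}) to get $\sigma(l)=l$ and hence $c_l=(a_m)_l$, and finish with Lemma~\ref{LemmaFact1}. The bookkeeping you worry about in step~(4) is a non-issue, since after the normalization $p$ and $q$ already share their first $m-1$ entries verbatim (only $p$'s entries define the $A_i$, and $p$ is untouched), so the only coordinate match needed is $c_l=(a_m)_l$.
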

\begin{proof}
Without loss of generality we can assume that $i=m-1$ and $j=m$.  Furthermore, after applying Lemma~\ref{LemmaFact0} and Remark~\ref{RemarkReplacePAndQ}, we can assume that 
\[p=(a_1 \upto a_{m-1},\,a_m)\quad \text{ and } \quad q=(a_1 \upto a_{m-1},\,c).\]
We consider the reduced vectors 
\begin{align*} p' := (a_1 \upto  a_{m-2},\,a_m) \quad \text{ and } \quad  q' := (a_1 \upto a_{m-2},\,c) \in V^{m-1}\end{align*}
and conclude as in the proof of Lemma~\ref{LemmaFact2} that there exists a permutation $\sigma \in \Sym_n$ such that $\sigma q' = p'$. 
By (\ref{eq1}) and (\ref{StabilizerOfVector}) we have 
\[ \sigma \in G_{a_1} \cap \ldots \cap G_{a_{m-2}} = G_{A_1} \cap \ldots \cap G_{A_{m-2}} = G_{A_1 \sqcap \ldots \sqcap A_{m-2}}.\]
Hence by (\ref{eqLemmaFact3}) there exists a position $l \in \{1 \upto n \}$ such that $\sigma(l) = l$. But since $\sigma c = a_m$, this implies $c_l = (a_m)_l$ and hence the $l$-th coordinates of the entries of $p$ and $q$ are the same. Lemma~\ref{LemmaFact1} concludes the proof.
\end{proof}

%%%%%%%%%%%%%%%%%%%%%%%%%%%%%%%%%%%%%%%%%%%%%%%%%%%%%%%%%%%%%%%%%%%%%%
\section{Proof of Theorem~\ref{TheoremMain}}\label{SectionProofOfTheorem1}
%%%%%%%%%%%%%%%%%%%%%%%%%%%%%%%%%%%%%%%%%%%%%%%%%%%%%%%%%%%%%%%%%%%%%%

\begin{proof_of}{of Theorem~\ref{TheoremMain}}. 
By Remark~\ref{remark_new} we can assume that $S$ is a generating set for $\K[V^{m_0}]^{\Sym_n}$.

We use induction on $n$. For $n = 1$ we have $m_0 \geq 1$ and  we can assume that $S \supset \{x(1)_1\}$. Then $S^{[m]}$ generates the algebra $\K[x(1)_1,\ldots,x(m)_1]$.

Now assume that $n\geq2$. Since $S\subset \K[V^{m_0}]^{\Sym_n}$ is separating, we know by Lemma~\ref{LemmaInductionOnN} that $\widetilde{S}$ is separating for the $\Sym_{n-1}$-action on $\widetilde{V}^{m_0}$. Clearly, \[ m_0 \geq \lfloor \frac{n-1}{2} \rfloor + 1,\] so by induction on $n$ we can use Theorem~\ref{TheoremMain} for $n-1$. It follows that 
\begin{equation}\label{eqSITilde}
\text{$\expan{(\widetilde{S})}{m}$ is separating for the $\Sym_{n-1}$-action on $\widetilde{V}^{m}$.}\end{equation}

Let us prove that $\expan{S}{m}$ is separating by induction on $m \geq m_0$. For $m = m_0$ there is nothing to show. Assume that $m>m_0$ and consider two vectors 
\[ p = (a_1 \upto a_{m-1},\,a_m) \in V^m \quad \text{ and } \quad q \in V^m \]
that can not be separated by $\expan{S}{m}$. To conclude the proof, we have to show that $p$ and $q$ are in the same $\Sym_n$-orbit.  
By induction on $m$ we have that 
\begin{equation}\label{eqSIexpM-1} \text{$\expan{S}{m-1}$ is separating for the $\Sym_n$-action on $V^{m-1}$.}\end{equation}
Since statements (\ref{eqSITilde}), (\ref{eqSIexpM-1}) hold and $\expan{(\expan{S}{m-1})}{m} = \expan{S}{m}$, Conditions~\ref{assump} are satisfied with $\expan{S}{m-1}$ taking the role of $S$ and $\expan{S}{m}$ taking the role of $T$. We apply Lemma~\ref{LemmaFact0} together with Remark~\ref{RemarkReplacePAndQ} and hence can assume that $p$ and $q$ are of the form
\begin{equation*}p = (a_1 \upto a_{m-1},\,a_m),\quad q = (a_1 \upto a_{m-1},\,c) \in V^m.\end{equation*}
Consider the partitions $A_i := \parti(a_i)$ for $i = 1 \upto m-1$ as defined in Section \ref{SectionPartitions}. We split the rest of the proof into two cases.

\medskip
\textbf{\underline{Case 1}}: Assume that there exists an $i \in \{1 \upto m-1\}$ such that 
\begin{equation*}G_{A_1} \cap \ldots \cap G_{A_{i-1}} \subset G_{A_i}.\end{equation*}
Then inclusion (\ref{eqLemmaFact2}) is valid. It follows from Lemma~\ref{LemmaFact2} that $p$ and $q$ are in the same $\Sym_n$-orbit.

\medskip
\textbf{\underline{Case 2}}: Assume that for all $i \in \{1 \upto m-1\}$ we have 
\begin{equation*}G_{A_1} \cap \ldots \cap G_{A_{i-1}} \not\subset G_{A_i}.\end{equation*}
We will only use these assumptions for  $i\leq m -2$. Since \[ m-2 \geq m_0 - 1 \geq \lfloor \frac{n}{2} \rfloor,\] we can apply Lemma \ref{Lemma2OnPartitions} to  $A_1,\ldots,A_{m-2}$ and obtain that
\begin{equation*} \min(A_1 \sqcap \ldots \sqcap A_{m-2}) = 1.\end{equation*}
Then Lemma~\ref{LemmaFact3} concludes the proof.
\end{proof_of}

%%%%%%%%%%%%%%%%%%%%%%%%%%%%%%%%%%%%%%%%%%%%%%%%%%%%%%%%%%%%%%%%%%%%%%
\section{Minimal Separating Sets}\label{MinimalSeparating}
%%%%%%%%%%%%%%%%%%%%%%%%%%%%%%%%%%%%%%%%%%%%%%%%%%%%%%%%%%%%%%%%%%%%%%

This section deals with the question of \emph{minimality} with respect to inclusion of a separating set of $\K[V^m]^{\Sym_n}$ among all separating sets of $\K[V^m]^{\Sym_n}$. At first we show that the method of expansion behaves well in this regard when dealing with certain sets. We call a set of invariants $S \subset \K[V^m]^{\Sym_n}$ {\it elementary} if 
\begin{enumerate}
\item[$\bullet$] any element of $S$ is equal to $\si_t(\un{k})$ for some $1\leq t\leq n$ and $\un{k}\in\N^m$;
\item[$\bullet$] if $\si_t(\un{k})\in S$, then $\si_l(\un{k})\in S$ for all $1\leq l\leq t$.
\end{enumerate}

\medskip
%Lemma L6.1---------------------------------------------------------------------
\begin{lem}\label{LemmaMinSep} Let $m_0,\,m$ be integers with $1 \leq m_0 \leq m$ and let $S \subset \K[V^{m_0}]^{\Sym_n}$ be an elementary set of invariants such that 
\begin{enumerate} 
  \item[(a)] $S$ is a minimal separating set;
  \item[(b)] $\expan{S}{m}$ is a separating set;
  \item[(c)] if $\sigma_t(\un k) \in S$ and $k_i = 0$ for some $i\in\{1,\ldots,m_0\}$, then $\sigma_t(\un r) \in S$ for each $\un{r}\in\N^{m_0}$ from the list: $(0,k_1,\ldots,\widehat{k_i},\ldots,k_{m_0})$, $(k_1,0,k_2,\ldots,\widehat{k_i},\ldots,k_{m_0})$, $\ldots$,
  $(k_1,k_2,\ldots,\widehat{k_i},\ldots,k_{m_0},0)$.
\end{enumerate}
Then $\expan{S}{m}$ is also a minimal separating set.
\end{lem}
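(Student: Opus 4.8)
The plan is to show that no proper subset of $\expan{S}{m}$ is separating. Since $\expan{S}{m}$ is separating by hypothesis~(b), it suffices to prove that for every $g \in \expan{S}{m}$ the set $\expan{S}{m} \setminus \{g\}$ fails to be separating. Write $g = f^{(\un j)}$ for some $f = \si_t(\un k) \in S$ (with $\un k \in \N^{m_0}$) and some $m$-admissible $\un j \in \N^{m_0}$. By the minimality of $S$ (hypothesis~(a)), there exist $u, v \in V^{m_0}$ lying in distinct $\Sym_n$-orbits that are separated by $f$ but by no other element of $S$; in particular $f(u) \neq f(v)$ and $h(u) = h(v)$ for all $h \in S \setminus \{f\}$.

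The core of the argument is to build out of $u,v$ a pair of points $p, q \in V^m$ that are separated by $g = f^{(\un j)}$ but by nothing else in $\expan{S}{m}$. The natural construction is: place the $m_0$ columns of $u$ into the column positions $j_1 \upto j_{m_0}$ of $p$, and fill every remaining column position of $p$ with the all-ones vector $e = (1,\ldots,1)^T \in V$ (which is $\Sym_n$-fixed); define $q$ from $v$ in the same way, using the same column positions. Then $g(p) = f(u) \neq f(v) = g(q)$, so $p$ and $q$ are indeed separated by $g$. It remains to check that any other $g' = (f')^{(\un j\,')} \in \expan{S}{m}$, with $f' = \si_{t'}(\un k') \in S$ and $\un j\,'$ $m$-admissible, satisfies $g'(p) = g'(q)$. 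When $g'(p)$ is evaluated, each variable $x(j'_s)_i$ is replaced by the $i$-th coordinate of the $j'_s$-th column of $p$: if $j'_s \in \{j_1 \upto j_{m_0}\}$, say $j'_s = j_{\pi(s)}$, this is the $i$-th coordinate of the $\pi(s)$-th column of $u$; if $j'_s \notin \{j_1 \upto j_{m_0}\}$, it is $1$. Setting the corresponding exponent variable to $0$ is what hypothesis~(c) is designed to handle: substituting a column equal to $e$ into $\si_{t'}(\un k')$ has the same effect on the polynomial identity as formally setting that coordinate of $\un k'$ to $0$ and rewriting. Thus $g'(p) = \bigl(\si_{t'}(\un r')\bigr)(u')$ where $\un r'$ is obtained from $\un k'$ by zeroing out the entries corresponding to $e$-columns and permuting, and $u'$ is the corresponding sub-tuple of columns of $u$; and $g'(q)$ equals the same expression with $v'$ in place of $u'$. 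Using the ``elementary'' property of $S$ together with hypothesis~(c), one argues that this resulting invariant $\si_{t'}(\un r')$ — suitably re-indexed on the $m_0$ columns occupied by $u$ — lies in $S$ (or is a product of a fixed power with such an invariant, or is simply $h^{(\cdots)}$ for some $h \in S$ evaluated on $u$ versus $v$), and that it is not $f$ itself as long as $g' \neq g$. Since $u$ and $v$ agree on every element of $S \setminus \{f\}$, we conclude $g'(p) = g'(q)$.

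I would organize the write-up as: (i) fix $g$, $f = \si_t(\un k)$, $\un j$, and the witnesses $u, v$ from minimality of $S$; (ii) construct $p, q \in V^m$ by the $e$-padding described above, and record that $g$ separates them; (iii) take an arbitrary $g' = (f')^{(\un j\,')} \in \expan{S}{m}$ with $g' \neq g$, evaluate $g'(p)$ and $g'(q)$, and reduce to an evaluation on $u$ and $v$ of some $\si_{t'}(\un r')$ after deleting the $e$-columns; (iv) invoke hypotheses~(a) and~(c) (and the elementary property) to see this reduced invariant is an element of $S$ different from $f$, hence takes equal values on $u$ and $v$; (v) conclude $g'(p) = g'(q)$ for all $g' \neq g$, so $\expan{S}{m} \setminus \{g\}$ does not separate $p$ from $q$, and therefore $\expan{S}{m}$ is minimal.

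The main obstacle is step~(iv): the bookkeeping that identifies the reduced invariant obtained from $g'$ after collapsing the $e$-columns with an honest member of $S$ — one must match up exponent tuples across the re-indexing of column positions and verify that hypothesis~(c) supplies exactly the members of $S$ needed, and that the case $g' \neq g$ genuinely forces the reduced invariant to differ from $f$ (rather than, say, two different $g'$ collapsing to the same invariant on $u,v$, which is harmless, or a $g'$ collapsing to $f$, which would break the argument and must be excluded). A secondary subtlety is making sure the "elementary" closure condition (second bullet of the definition) is not actually needed here beyond what~(c) already gives, or else clearly flagging where it enters; I expect it only to be used to keep $S$ well-behaved under the $t \mapsto l \leq t$ truncation when a column collapses and $\si_{t'}$ must be re-read as a lower-degree elementary multisymmetric polynomial.
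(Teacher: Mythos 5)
There is a genuine gap, and it sits exactly where you flagged your ``main obstacle'': the choice of padding makes step~(iv) false. You fill the unused columns of $p$ and $q$ with the all-ones vector $e$. An invariant $g'=(f')^{(\un{l})}\in\expan{S}{m}$ with $f'=\si_{t'}(\un{k}')$ whose index tuple $\un{l}$ hits an $e$-column then does not vanish; it collapses to $\si_{t'}$ of the exponent vector obtained by setting the corresponding --- possibly \emph{non-zero} --- entries of $\un{k}'$ to zero. Hypothesis~(c) only guarantees that $S$ is closed under relocating zeros that are \emph{already present} in an exponent vector; it says nothing about creating new zeros, so the collapsed invariant need not lie in $S$, and it can even equal $f$ itself, in which case $g'$ separates your $p$ from your $q$ and the construction proves nothing. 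Concretely, take $n=2$, $m_0=2$, $m=3$, $S=T_{2,2}$, $f=\tr(1,0)$, $\un{j}=(1,2)$, so that column $3$ of $p$ and $q$ is $e$. Then $g'=\tr(1,1)^{(1,3)}=\tr(1,0,1)$ lies in $\expan{S}{3}\setminus\{g\}$ and satisfies $g'(p)=\sum_i (u_1)_i\cdot 1=\tr(1,0)(u)=f(u)\neq f(v)=g'(q)$. So $\expan{S}{3}\setminus\{g\}$ \emph{does} separate $p$ from $q$, and minimality does not follow.

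The repair is to pad with the zero vector instead of $e$ (this is what the paper does). Then any $H=\si_t(\un{k}')$ whose expanded exponent vector $\un{k}'$ has a non-zero entry at a padded position satisfies $H(p)=0=H(q)$ outright, since every monomial $x^{\un{k}'}_{i_1}\cdots x^{\un{k}'}_{i_t}$ acquires a factor $0^{k}$ with $k\geq 1$; no identification with an element of $S$ is needed in this case. The only remaining case is that $\un{k}'$ is supported inside $\{j_1\upto j_{m_0}\}$, and there the collapsed invariant is obtained from $\si_t(\un{k})$ by moving zeros around without altering the non-zero entries --- exactly the situation covered by~(c), which places it in $S$; and it differs from $f$, since otherwise $H=f^{(\un{j})}=g$. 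This also settles your secondary worry: the parameter $t$ never drops under either padding, so the second bullet of ``elementary'' is not what is doing the work here --- only the requirement that every element of $S$ be some $\si_t(\un{k})$ is used.
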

\begin{proof} We need to show that for every $F \in \expan{S}{m}$ the set $\expan{S}{m} \setminus \{F\}$ is \emph{not} separating. For such an $F$ by (\ref{DefExpansion}) there exists $f \in S$ and an $m$-admissible $\un j \in \N^{m_0}$ such that $F = f^{(\un j)}$. By assumption, $S \setminus \{f\}$ is not separating. So there exist
\[ p = (a_1 \upto a_{m_0}) \quad \text{ and } \quad q = (b_1 \upto b_{m_0}) \in V^{m_0} \]
with $f(p) \neq f(q)$ and $h(p) = h(q)$ for all $h \in S \setminus \{ f \}$. Consider 
\[ p'=(0,\ldots,0, \underbrace{a_1,}_{\text{position } j_1}0,\;\;\ldots\;\;,0, \underbrace{a_{m_0},}_{\text{position } j_{m_0}} 0, \ldots , 0) \in V^m\]
and $q' \in V^m$ which is defined accordingly. Hence 
$$F(p') = f(p) \neq f(q) = F(q').$$ 
We finish the proof by showing 
\begin{equation*}
H(p') = H(q')\text{ for all } H \in \expan{S}{m} \setminus \{F\}. 
\end{equation*}
By (\ref{DefExpansion}) for any such $H$ there exist $h \in S$ and an $m$-admissible $\un l \in \N^{m_0}$ such that $H = h^{(\un l)}$. Since $S$ is elementary, $h = \si_t(\un k)$ with $t \in \{ 1 \upto n\}$ and $\un k \in \N^{m_0}$. Hence $H = \si_t(\un{k'})$ with 
\[ \un{k'} = (0,\ldots,0, \underbrace{k_1,}_{\text{position } l_1}0,\;\;\ldots\;\;,0, \underbrace{k_{m_0},}_{\text{position } l_{m_0}} 0, \ldots , 0)\in\N^m .\]
If $\un{k'}$ has a non-zero entry at a position different from $j_1 \upto j_{m_0}$, then 
\[ H(p') = 0 = H(q')\]
and we are done. Otherwise, 
$\un{k'}$ is zero at all positions $i \notin \{ j_1 \upto j_{m_0} \} \cap \{l_1 \upto l_{m_0} \}$. Let $\{k_{t_1},\ldots,k_{t_s}\}$ be the set of all non-zero elements of $\un{k}$, where $1\leq t_1<\cdots <t_s\leq m_0$ and $s\geq1$. Since $l_{t_1},\ldots, l_{t_{s}}\in \{ j_1 \upto j_{m_0}\}$, we set $l_{t_1}=j_{d_1},\ldots,l_{t_s}=j_{d_s}$ for some $1\leq d_1<\cdots< d_s\leq m_0$. 
Hence we can view $H$ as $H = \sigma_t(\un r)^{(\un j)}$ for 
\[ \un{r} = (0,\ldots,0, \underbrace{k_{t_1},}_{\text{position } d_1}0,\;\;\ldots\;\;,0, \underbrace{k_{t_s},}_{\text{position } d_s} 0, \ldots , 0)\in\N^{m_0} .\]
Since $\si_t(\un k)\in S$, condition (c) implies that $\sigma_t(\un r) \in S$. The inequality  $F \neq H$ implies $\sigma_t(\un r) \neq f$. It follows that $H(p') = \sigma_t(\un r)(p) = \sigma_t(\un r)(q) = H(q')$ and we are done.
\end{proof}
\medskip

The following example shows that condition (c) from Lemma~\ref{LemmaMinSep} can not be removed.  Namely, in case $\charakt(\K) \neq 2$ and $n=2$ the set 
$$S = \{ \tr(1,0),\; \tr(2,0),\; \tr(0,1),\;\si_2(0,1),\;\tr(1,1) \}\subset \K[V^2]^{\Sym_2 }$$ 
is a minimal separating set and the set $S^{[3]}$ is separating, since we can apply the equality  $2 \si_2(0,1) = \tr(0,1)^2 - \tr(0,2)$ to part (a) of Theorem~\ref{TheoremMainMin} (see below). Since invariants $h_1=\tr(0,1,0)$, $h_2=\tr(0,2,0)$, $h_3=\si_2(0,1,0)$ belong to $S^{[3]}$ and $2 h_3 = h_1^2- h_2$, the set $S^{[3]}$ is not a minimal separating set.

In \cite[Theorem 2]{reimers2019} minimal separating sets for $m = 2$ and $n = 2,\,3,\,4$ were constructed in characteristic 0. 

%Remark 6.2---------------------------------------------------------------------
\begin{remark}\label{RemarkNewExamples}
Theorem~1 and Theorem~2 of \cite{reimers2019} remain true in the case of \linebreak$\charakt(\K) > n$.
\end{remark}

Using our previous results we extend this to $m \geq 3$ in the following theorem. Note in particular, that for $n = 4$ and $m \geq 3$ we can not simply take the expansion of a separating set for $m = 2$.

\medskip
%Theorem 2--------------------------------------------------------------------
\begin{thm}\label{TheoremMainMin} Let $\charakt(\K) = 0$ or $\charakt(\K) > n$. Then for $n=2,\,3,\,4$ and $m \geq 2$ the set $T_{n,m}$ is a minimal separating set for the algebra of invariants $\K[V^{m}]^{\Sym_n}$, where %the sets $T_{n,m}\subset\N^m$ are defined as follows: 
\begin{enumerate}[label=(\alph*)] 
\item for $n=2$ let \begin{align*} &T_{2,2} = \{\tr(r,0),\tr(0,r),\tr(1,1) \mid r=1,2\}, \\ &T_{2,m}=\expan{T_{2,2}}{m} \quad \text{for all } m \geq 3; \end{align*}
\item for $n=3$ let \begin{align*} &T_{3,2}=\{\tr(r,0),\tr(0,r),\tr(1,1),\tr(2,1) \mid r = 1,2,3\}, \\ &T_{3,m}=\expan{T_{3,2}}{m} \quad \text{ for all } m \geq 3;\end{align*}
\item for $n=4$ let  \begin{align*} &T_{4,2}=\{\tr(r,0),\tr(0,r),\tr(1,1),\tr(2,1),\tr(1,2),\tr(3,1)\mid  r =1,2,3,4\},\\ &T_{4,3}=\expan{T_{4,2}}{3} \cup \{\tr(1,1,1)\},\\ &T_{4,m}=\expan{T_{4,3}}{m}  \quad \text{for all $m \geq 4$.}\end{align*}
\end{enumerate}
\end{thm}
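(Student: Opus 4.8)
The plan is to prove Theorem~\ref{TheoremMainMin} in two stages for each $n \in \{2,3,4\}$: first establish that the base set ($T_{n,2}$, or for $n=4$ also $T_{4,3}$) is a minimal separating set, and then bootstrap to all $m$ via the expansion machinery. For the base cases I would invoke Remark~\ref{RemarkNewExamples}: by \cite[Theorem~2]{reimers2019} (which remains valid for $\charakt(\K) > n$), the sets $T_{2,2}$, $T_{3,2}$, $T_{4,2}$ are precisely the minimal separating sets constructed there, so the $m=2$ assertions are immediate. The genuinely new content is the passage to $m \geq 3$ (and for $n=4$, the extra generator $\tr(1,1,1)$ at $m=3$).

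For the separating property at larger $m$, I would apply Theorem~\ref{TheoremMain}: since $\lfloor \tfrac{2}{2}\rfloor + 1 = 2$ and $\lfloor \tfrac{3}{2}\rfloor + 1 = 2$, for $n=2,3$ the expansion of the separating set $T_{n,2}$ for $\K[V^2]^{\Sym_n}$ is separating for all $m \geq 2$, giving (a) and (b) directly. For $n=4$ we have $\lfloor \tfrac{4}{2}\rfloor + 1 = 3$, so Theorem~\ref{TheoremMain} only yields that expansions of a separating set for $\K[V^3]^{\Sym_4}$ are separating for $m \geq 3$; hence I must first show that $T_{4,3} = \expan{T_{4,2}}{3} \cup \{\tr(1,1,1)\}$ is separating for $\K[V^3]^{\Sym_4}$. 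One expects $\expan{T_{4,2}}{3}$ alone to fail to be separating (this is the reason $\tr(1,1,1)$ is added), so this step requires a direct argument: take $p,q \in V^3$ not separated by $\expan{T_{4,2}}{3} \cup \{\tr(1,1,1)\}$, use Lemma~\ref{LemmaFact0}-type reductions to normalize $q$ against $p$ in two of the three coordinate slots, and then analyze the partition structure $\parti(a_i)$ of the remaining coordinate vectors in $\K^4$ — since $n=4$ there are few partition types, and the added power sum $\tr(1,1,1)$ supplies exactly the information needed to separate the residual ambiguous configurations. Once $T_{4,3}$ is separating for $m=3$, Theorem~\ref{TheoremMain} (with $m_0 = 3$) gives that $\expan{T_{4,3}}{m}$ is separating for all $m \geq 4$, completing (c).

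For minimality I would lean on Lemma~\ref{LemmaMinSep}. Each of $T_{2,2}$, $T_{3,2}$, $T_{4,2}$ is an elementary set of invariants in the sense defined before that lemma — every element is some $\si_t(\un k)$, and the ``downward closure in $t$'' condition holds because the sets consist entirely of power sums $\tr = \si_1$ together with $\si_2(0,1)$-type terms whose $\si_1$-companions are already present; I would check this bookkeeping explicitly for each $n$. Conditions (a) (minimal separating, from the base case) and (b) (expansion separating, just shown) hold, so it remains to verify condition (c): whenever $\si_t(\un k) \in S$ has a zero entry, all the ``shifts'' obtained by permuting the zero past the nonzero entries also lie in $S$. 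For the power-sum sets $T_{2,2}$ and $T_{3,2}$ this is a short case check using the obvious symmetry $\tr(r,0) \leftrightarrow \tr(0,r)$ etc.; for $T_{4,2}$ one must confirm that $\tr(2,1)$, $\tr(1,2)$ are both present (they are), and handle $\tr(r,0) \leftrightarrow \tr(0,r)$, $\tr(3,1)$ (whose only zero-shift partner $\tr(1,3)$ or $\tr(0,\cdot)$ must be checked — here I expect the list in the theorem to have been chosen precisely so (c) holds, possibly after noting $\tr(3,1)$'s relevant shift is $\tr(1,3)$, which needs to be in $T_{4,2}$ or the argument adjusted). Then Lemma~\ref{LemmaMinSep} yields minimality of $\expan{T_{n,2}}{m}$ for $n=2,3$ and all $m$, and of $\expan{T_{4,3}}{m}$ for $m \geq 4$. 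The remaining minimality claim is that $T_{4,3} = \expan{T_{4,2}}{3} \cup \{\tr(1,1,1)\}$ is itself minimal for $m=3$: for each $F \in \expan{T_{4,2}}{3}$ one reuses the Lemma~\ref{LemmaMinSep} construction (checking that $\tr(1,1,1)$, having a shape not arising from the lemma's $\un r$, does not interfere: on the witness points $p',q'$ built from slots $j_1<j_2$, the third slot is zero, so $\tr(1,1,1)$ evaluates to $0$ on both), while for $F = \tr(1,1,1)$ one must exhibit $p,q \in V^3$ in different $\Sym_4$-orbits that agree on all of $\expan{T_{4,2}}{3}$ but differ on $\tr(1,1,1)$ — concretely a pair of $4\times 3$ matrices realizing this, which I would construct by hand.

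The main obstacle is the $n=4$, $m=3$ step: showing $\expan{T_{4,2}}{3} \cup \{\tr(1,1,1)\}$ is separating on $\K[V^3]^{\Sym_4}$ and that $\tr(1,1,1)$ is genuinely needed (hence irredundant). This is the only place where Theorem~\ref{TheoremMain} does not hand the result over for free, and it demands an honest orbit-separation argument in $V^3 = (\K^4)^3$ together with an explicit non-separated pair of matrices for the irredundancy of $\tr(1,1,1)$; everything else is either cited from \cite{reimers2019} or a mechanical application of Lemmas~\ref{LemmaMinSep} and Theorem~\ref{TheoremMain} plus elementary-set bookkeeping.
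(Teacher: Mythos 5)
Your plan coincides with the paper's proof step for step: $m=2$ from \cite[Theorem~2]{reimers2019} via Remark~\ref{RemarkNewExamples}, Theorem~\ref{TheoremMain} with $m_0=2$ (resp.\ $m_0=3$) for $n=2,3$ (resp.\ $n=4$, $m\geq 4$), a direct argument that $T_{4,3}$ is separating, and Lemma~\ref{LemmaMinSep} for minimality. The one place you stop short of a proof is exactly the step you flag: after the Lemma~\ref{LemmaFact0}/\ref{LemmaFact2}/\ref{LemmaFact3} reductions the residual case is that each $\parti(a_i)$ is a $2{+}2$ partition of $[4]$; the paper normalizes $a_1,a_2,a_3$ to the three distinct such partitions, uses the traces $\tr(r,0,1),\tr(0,r,1),\tr(0,0,r)$ to force $c$ to be the ``flip'' of $a_3$, and then computes $\tr(1,1,1)(p)-\tr(1,1,1)(q)=(\lambda_1-\lambda_2)(\mu_1-\mu_2)(\nu_1-\nu_2)\neq 0$, plus an explicit integer witness pair for the irredundancy of $\tr(1,1,1)$. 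Your hesitation about condition~(c) and $\tr(1,3)$ is unnecessary: condition~(c) only applies to tuples $\un k$ with a zero entry and preserves the relative order of the nonzero entries, so $\tr(3,1)$ imposes no requirement and the expansions of $\tr(3,1)$ already satisfy~(c).
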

\begin{proof} 
The case $m = 2$ follows from \cite[Theorem~2]{reimers2019} together with Remark~\ref{RemarkNewExamples}.

For $n = 2$ and $n = 3$ we can take $m_0 = 2$ in Theorem~\ref{TheoremMain}. %\[ m_0=\lfloor \frac{n}{2} \rfloor +1=2.\] 
Since $T_{n,2}$ is separating, we have that $T_{n,m}$ is separating for all $m \geq 3$ by Theorem~\ref{TheoremMain}. The minimality of $T_{n,m}$ follows from Lemma~\ref{LemmaMinSep}.

Assume $n = 4$. Let us show that
\[ T_{4,3} \text{ is separating.}\] Consider $p = (a_1,\,a_2,\,a_3) \in V^3$ and $q \in V^3$ that can not be separated by $T_{4,3}$. Conditions~\ref{assump} are satisfied for $S := T_{4,2}$ and $T := T_{4,3}$, because $\widetilde{T_{4,3}} \supset T_{3,3}$ is separating. The combination of Lemma~\ref{LemmaFact0} and Remark~\ref{RemarkReplacePAndQ} allows us to assume that
\begin{equation*} p = (a_1,\,a_2,\,a_3), \quad q = (a_1,\,a_2,\,c).\end{equation*}
If for some $i \neq j$ with $i,\,j \in \{1,\,2,\,3\}$ we have $G_{a_i} \subset G_{a_j}$, we can apply Lemma~\ref{LemmaFact2} to obtain that $p$ and $q$ are in the same $\Sym_4$-orbit. Hence we can assume that for all $i \neq j$ we have $G_{a_i} \not\subset G_{a_j}$. Then in particular for all $i \in \{1,\,2,\,3\}$ we have $G_{a_i} \neq \Sym_4$ and $G_{a_i} \neq \{ \id \}$. The former means that not all coordinates of $a_i$ are equal, while the latter means that not all coordinates of $a_i$ are pairwise distinct. Therefore, for the corresponding partition $A_i := \parti(a_i)$ we know that $|A_i| \in \{ 2,\,3\}$ for all $i$. 

If $\min(A_i)=1$ for some $i$, then we can apply Lemma~\ref{LemmaFact3} to conclude that $p$ and $q$ are in the same $\Sym_4$-orbit. Thus the only remaning case is that for all $i \in \{ 1,\,2,\,3\}$ we have $|A_i| = 2$ and $\min(A_i)=2$.
Then for each $i$ the vector $a_i \in \K^4$ has exactly two pairs of distinct coordinates. After applying some permutation to $p$ and $q$ (see Remark~\ref{RemarkReplacePAndQ}) we can assume that 
\[ a_1 = \begin{pmatrix}\lambda_1 \\ \lambda_1 \\ \lambda_2 \\ \lambda_2 \end{pmatrix}, \quad a_2 = \begin{pmatrix}\mu_1\\ \mu_2 \\ \mu_1 \\ \mu_2 \end{pmatrix}, \quad a_3 = \begin{pmatrix}\nu_1\\ \nu_2 \\ \nu_2 \\ \nu_1 \end{pmatrix}\]
with $\lambda_1,\,\lambda_2,\,\mu_1,\,\mu_2,\,\nu_1,\,\nu_2 \in \K$ such that 
\begin{equation*}
\lambda_1\neq \lambda_2,\quad \mu_1\neq \mu_2, \quad \nu_1\neq \nu_2.
\end{equation*}
The equations $\tr(r,0,1)(p)=\tr(r,0,1)(q)$ with $r=0,1$ imply $c_1+c_2=c_3+c_4$. Similarly, $\tr(0,r,1)(p)=\tr(0,r,1)(q)$ with $r=0,1$ imply $c_1+c_3 = c_2+c_4$. Since $\charakt(\K)\neq2$, we obtain $c_3=c_2$ and $c_4=c_1$. If $c = a_3$, then $p = q$ and we are done. So let us assume $c \neq a_3$. Then the equations $\tr(0,0,r)(p) = \tr(0,0,r)(q)$ with $r = 0,1$  imply that
%The four equations $\tr(0,0,r)(p) = \tr(0,0,r)(q)$ with $r = 1 \upto 4$ imply %that there exists a permutation $\sigma \in \Sym_4$ with $\sigma c = a_3$. If %there exists $l \in \{1 \upto 4\}$ with $c_l = (a_3)_l$, then %Lemma~\ref{LemmaFact1} shows that $p$ and $q$ are in the same $\Sym_4$-orbit. %Hence we can assume that $c_l \neq (a_3)_l$ for all $l$. Because of $\sigma c = %a_3$ we have
\[ c = \begin{pmatrix}\nu_2\\ \nu_1 \\ \nu_1 \\ \nu_2 \end{pmatrix}.\]
Since $\tr(1,1,1)$ belongs to $T_{4,3}$, we obtain \begin{align*} 0 &= \tr(1,1,1)(p) - \tr(1,1,1)(q) \\ &= \lambda_1 \mu_1 (\nu_1 - \nu_2) + \lambda_1 \mu_2 (\nu_2 - \nu_1) + \lambda_2 \mu_1 (\nu_2 - \nu_1) + \lambda_2 \mu_2 (\nu_1 - \nu_2) \\
&= (\nu_1 - \nu_2) (\mu_1 - \mu_2) (\lambda_1 - \lambda_2);\end{align*}
a contradiction. This shows that $T_{4,3}$ is separating.
%Now because of (\ref{pqinProofOfTheorem2}) the four equations
%\[ f_{0,0,1}(p) = f_{0,0,1}(q), \quad f_{1,0,1}(p) = f_{1,0,1}(q), \quad f_{0,1,1}(p) = f_{0,1,1}(q), \quad f_{1,1,1}(p) = f_{1,1,1}(q)\]
%translate to 
%\[ \begin{pmatrix}1&1&1&1\\ \lambda_1 &\lambda_1 &\lambda_2&\lambda_2 \\ \mu_1 &\mu_2 &\mu_1&\mu_2\\ \lambda_1 \mu_1& \lambda_1 \mu_2 & \lambda_2 \mu_1 & \lambda_2 \mu_2 \end{pmatrix} \cdot a_3 =  \begin{pmatrix}1&1&1&1\\ \lambda_1 &\lambda_1 &\lambda_2&\lambda_2 \\ \mu_1 &\mu_2 &\mu_1&\mu_2\\ \lambda_1 \mu_1& \lambda_1 \mu_2 & \lambda_2 \mu_1 & \lambda_2 \mu_2 \end{pmatrix} \cdot c.\]
%It is easy to check that this matrix invertible, hence $a_3 = c$ and $p = q$ follows. This shows that $S(I_{4,3})$ is separating.

Now let us show that $T_{4,3}$ is a minimal separating set. For $f \in T_{4,3}$ with $f \neq \tr(1,1,1)$ the minimality of $T_{4,2}$ implies that $T_{4,3} \setminus \{f \}$ is not separating. For $f = \tr(1,1,1)$ consider the vectors $p=(a_1,\,a_2,\,a_3)$ and $q=(a_1,\,a_2,\,c) \in V^3$ with
\[ a_1 = \begin{pmatrix}1\\1\\2\\2\end{pmatrix}, \quad a_2= \begin{pmatrix}1\\2\\1\\2\end{pmatrix},\quad a_3= \begin{pmatrix}1\\2\\2\\1\end{pmatrix},\quad c= \begin{pmatrix}2\\1\\1\\2\end{pmatrix}.\]
Here $\tr(1,1,1)(p) \neq \tr(1,1,1)(q)$, but $p$ and $q$ can not be separated by $T_{4,3} \setminus \{ \tr(1,1,1) \}$.
So the theorem is proven for $n = 4$ and $m = 3$. 

Since for $n = 4$ we can take $m_0 = 3$ in Theorem~\ref{TheoremMain}, the set $T_{4,m}$ is separating for all $m \geq 4$. The minimality of $T_{4,m}$ follows from Lemma~\ref{LemmaMinSep}.
\end{proof}
\medskip

Now we assume that $\charakt(\K) =  0$ or $\charakt(\K)> n$ and compare the asymptotic sizes of multi-homogeneous generating and separating sets when $n$ is fixed and $m$ tends to infinity. Denote by $M_m$ the minimal generating set of $\K[V^m]^{\mathcal{S}_n}$ from (\ref{eqMinimalGen}) depending on $m$. Theorem~\ref{TheoremMain} allows us to use $M_{m_0}$, where $m_0 := \lfloor \frac n 2 \rfloor + 1$, to construct separating sets of $\K[V^m]^{\Sym_n}$ for $m > m_0$. We define
\[ S_m := \begin{cases}M_m \quad &\text{ for } m \leq m_0, \\[1mm] \expan{M_{m_0}}{m} \quad &\text{ for } m> m_0. \end{cases} \]
By Theorem~\ref{TheoremMain}, $S_m$ is a separating set for all $m$. 
For two functions $f,\,h:\N\to \R_{>0}$ we write $f\sim h$ if $\lim_{m\to\infty}\frac{f(m)}{h(m)}=1$.

\medskip
%Remark Rem5.1---------------------------------------------------------------------
\begin{remark}\label{RemarkFraction}
Let $\charakt(\K) =  0$ or $\charakt(\K)> n$. Then as functions of $m$ 
\[ \frac{|S_m|}{|M_m|}\sim \binom{n}{m_0} \frac{n!}{m_0!} \; \frac{1}{m^{n-m_0}}. \] In particular for $n \geq 3$, we have \[ \lim_{m \to \infty} \frac{|S_m|}{|M_m|} = 0.\]
\end{remark}

\bigskip

%-------------------------------
%Bibliography
%-------------------------------

%\bibliographystyle{myplain}
\bibliographystyle{siam}
\bibliography{literature}

\end{document}